\theoremstyle{plain} 
\newtheorem{theorem}{\indent\sc Theorem}[section] 
\newtheorem{lemma}[theorem]{\indent\sc Lemma}
\newtheorem{corollary}[theorem]{\indent\sc Corollary}
\theoremstyle{definition} 
\newcommand{\tententen}{
\begin{picture}(0,0)
\setlength{\unitlength}{1.08pt}
{\thicklines
\put(3,0){\circle*{1}}
\put(6,0){\circle*{1}}
\put(9,0){\circle*{1}}
}
\qbezier(12,0)(17,0)(22.3,0)
\end{picture}}
\newcommand{\koreraha}{
\begin{picture}(0,0)
\setlength{\unitlength}{1.08pt}
\qbezier(-22.5,10)(-8.25,15)(6,15)\qbezier(6,15)(20.25,15)(34.5,10)
\end{picture}}
\newcommand{\migite}{
\begin{picture}(0,0)
\setlength{\unitlength}{1.08pt}
{\thicklines
\put(6.05,0){\circle{12}}
}
\qbezier(12,0)(17,0)(22.3,0)
\end{picture}}
\newcommand{\hidariashi}{
\begin{picture}(0,0)
\setlength{\unitlength}{1.08pt}
{\thicklines
\put(6.05,0){\circle{12}}
}
\end{picture}}
\long\def\@makecaption#1#2{%
\vskip\abovecaptionskip
\sbox\@tempboxa{#1. #2}
\ifdim \wd\@tempboxa >\hsize%
#1 #2\par
\else
\global \@minipagefalse
\hb@xt@\hsize{\hfil\box\@tempboxa\hfil}%
\fi
\vskip\belowcaptionskip}
\def\address#1#2{\begingroup
\noindent\parbox[t]{7.8cm}{%
\small{\scshape\ignorespaces#1}\par\vskip1ex
\noindent\small{\itshape E-mail address}%
\/: #2\par\vskip4ex}\hfill%
\endgroup}%
\title{\uppercase{Extremal hyperelliptic fibrations on rational surfaces}} 
\author{
\textsc{Shinya Kitagawa} 
}
\date{} 
\begin{document}
\setlength{\baselineskip}{6.5mm}

\maketitle

\footnote{ 
2000 \textit{Mathematics Subject Classification}.
Primary 14J26; Secondary 14D06.
}
\footnote{ 
\textit{Key words and phrases}. 
Rational surfaces, fibrations, Mordell-Weil group.
}

\begin{abstract}
We consider a rational surface with a relatively minimal fibration. 
Picard number of a such fibred surface is bounded in terms of the genus of a general fibre. 
When Picard number is the maximum for any given genus, we characterize a such fibred surface 
whose Mordell-Weil group is trivial by singular fibres. 
Furthermore, we describe the defining equation explicitly. 
\end{abstract}

%
%
\section{Introduction}
The theory of the Mordell-Weil lattices are sufficiently developed by 
Oguiso and Shioda in \cite{OS} for minimal elliptic rational surfaces. 
In their work, the even unimodular root lattice $E_8$ of rank eight played very important role 
as the predominant frame. 
For example, it was shown that the Mordell-Weil group is trivial if and only if 
there exists a singular fibre of type $\mathrm{II}^*$ in the sense of Kodaira \cite{Kodai63} 
whose dual graph contains $E_8$ as a subgraph. 
The lattice $E_8$ also appears in another application by Shioda \cite{Shioda91} to describe 
a hierarchy of deformations of rational double points. 

Let $X$ be a smooth projective rational surface and 
$f:X\to \mathbb{P}^1$ a relatively minimal fibration 
whose general fibre is a projective curve of genus $g\geq2$. 
Although the Picard number $\rho(X)$ is letter than or equal to $4g+6$ in general, 
we consider the case $\rho(X)=4g+6$. 
Then the maximal Mordell-Weil lattice is isomorphic to a unimodular lattice called 
$D_{4g+4}^+$ in \cite[\S7, Ch.~4]{CS} of rank $4g+4$ (cf.~\cite{SS} and \cite{Viet00}). 
Furthermore, Saito \cite{Saito94} gives an example of $f:X\to\mathbb{P}^1$ whose Mordell-Weil group is trivial 
and which has an extension of a singular fibre of type $\mathrm{II}^*$. 
Since $D_8^+=E_8$, we expect an application as like the elliptic case \cite{Shioda91}. 

The goal of this paper is to give necessary and sufficient conditions for Mordell-Weil group of $f$ to be trivial 
by singular fibres of $f$, defining equations, and so on. 
The condition $(2\mathrm{a})$ or $(2\mathrm{b})$ in Theorem~\ref{MT} 
characterizes a reducible fibre of $f$ by the dual graph, though $f$ also have irreducible singular fibres. 
In particular, the kind of the reducible fibre of $f$ is unique. 
Here any irreducible fibre of $f$ does not affect the structure of Mordell-Weil group and lattice of $f$ in general. 
The condition $(4\mathrm{a})$ or $(5\mathrm{a})$ in Theorem~\ref{MT} describes 
how to construct $f:X\to\mathbb{P}^1$ whose Mordell-Weil group is trivial. 
Furthermore, in the conditions $(4\mathrm{b})$ and $(5\mathrm{b})$ corresponding to $(4\mathrm{a})$ and $(5\mathrm{a})$ 
respectively, we describe a defining equation of $f:X\to\mathbb{P}^1$.

In the case where $g=2$ and $\rho(X)=14$, from Theorem~~\ref{Thm:PM} and $(3)$ in Theorem~\ref{MT}, 
we have that there exist no birational morphisms $X\to\mathbb{P}^2$ if and only if Mordell-Weil group of $f$ is trivial. 
Furthermore, for any fibred rational surface of genus two whose Picard number is less than fourteen, 
we show that there exists a birational morphism from the surface to $\mathbb{P}^2$ in \cite{Kitagawa4}. 

\medskip

A{\sc cknowledgement}. 
The author would like to express his heartfelt gratitude to 
Professor Kazuhiro Konno for his valuable advice, guidance and encouragement. 

%
%
\section{Preliminaries}
We briefly review basic notation and results of fibred rational surfaces and Mordell-Weil lattices. 
Here, a fibred rational surface means a smooth projective rational surface $X/\mathbb{C}$ together with 
a relatively minimal fibration $f:X\to \mathbb{P}^1$ 
whose general fibre $F$ is a smooth projective curve of genus $g\geq1$. 
In particular, any fibre of $f$ is connected and contains no $(-1)$-curves as components. 
Since $X$ is rational, the first Betti number equals zero. 
The second Betti number is equal to Picard number $\rho(X)$ since the geometric genus is zero. 
Hence, we see that $\rho(X)=10-K_X^2=4g+6-(K_X+F)^2$ from Noether's formula. 
Furthermore, from \cite{slope}, we have that the adjoint divisor $(K_X+F)$ is nef when $g\geq2$. 
In particular, $\rho(X)\leq4g+6$. 

Via $f$, we can regard $X$ as a smooth projective curve of genus $g$ 
defined over the rational function field $\mathbb{K}=f^*\mathbb{C}(\mathbb{P}^1)$. 
We assume that it has a $\mathbb{K}$-rational point $O$. 
Let $\mathcal{J}_\mathcal{F}/\mathbb{K}$ be the Jacobian variety of the generic fibre $\mathcal{F}/\mathbb{K}$ of $f$. 
The Mordell-Weil group of $f$ is the group of $\mathbb{K}$-rational points $\mathcal{J}_\mathcal{F}(\mathbb{K})$. 
It is a finitely generated abelian group, since $X/\mathbb{C}$ is a rational surface. 
The rank of the group is called the {\it Mordell-Weil rank}. 
From \cite{Shioda90} and \cite{Shioda99}, it is given by 
\begin{align}
\mathrm{rk}\mathcal{J}_\mathcal{F}(\mathbb{K})
=\rho(X)-2-\sum_{t\in\mathbb{P}^1}(v_t-1), 
\label{Eq:MWrk}
\end{align}
where $v_t$ denotes the number of irreducible components of the fibre $f^{-1}(t)$. 
There is a natural one-to-one correspondence between 
the set of $\mathbb{K}$-rational points $\mathcal{F}(\mathbb{K})$ and the set of sections of $f$. 
For $P\in\mathcal{F}(\mathbb{K})$, we denote by $(P)$ the section 
corresponding to $P$ which is regarded as a horizontal curve on $X$. 
In particular, $(O)$ corresponding to the origin $O$ of $\mathcal{J}_\mathcal{F}(\mathbb{K})$ 
is called the {\it zero section}. 
Shioda's main idea in \cite{Shioda90} and \cite{Shioda99} 
is to view the free part of $\mathcal{J}_\mathcal{F}(\mathbb{K})$ 
as a Euclidean lattice with respect to a natural pairing induced by the intersection form on $H^2(X)$. 
The lattice is called the {\it Mordell-Weil lattice} of $f$. 
We denote by $\mathrm{MWL}(f)$ it. 
In fact, by describing the N\'eron-Severi group $\mathrm{NS}(X)$, 
we can explicitly determine the structure of $\mathrm{MWL}(f)$ as follows: 
Let $T$ be the subgroup of $\mathrm{NS}(X)$ generated by $(O)$ and all the irreducible components of fibres of $f$. 
When we equip $\mathrm{NS}(X)$ and $T$ with the bilinear form which is $(-1)$ times of the intersection form, 
we call them the N\'eron-Severi lattice $\mathrm{NS}(X)^-$ and the trivial lattice $T^-$ respectively. 
Since $X$ is a rational surface, $\mathrm{NS}(X)^-$ is a unimodular lattice, that is, 
the absolute value of the determinant of the Gram matrix equals one. 
Then the following holds. 
\begin{theorem}[{\upshape \cite{Shioda90}, \cite{Shioda99}}]\label{Thm:MW}
Keep the notation and assumptions. 
Put $\hat{T}=(T\otimes\mathbb{Q})\cap\mathrm{NS}(X)$. 
Then 
\begin{align*}
\mathcal{J}_\mathcal{F}(\mathbb{K})\simeq\mathrm{NS}(X)/T, \ \ \ \mathcal{J}_\mathcal{F}(\mathbb{K})_{\mathrm{tor}}\simeq\hat{T}/T.
\end{align*}
Let $L$ be the orthogonal complement $(T^-)^\perp\subset\mathrm{NS}(X)^-$. 
Then the dual lattice 
\begin{align*}
L^*=\{ \mathfrak{x}\in L\otimes\mathbb{Q} \ | \ \langle \mathfrak{x}, \mathfrak{y} \rangle 
	_{L\otimes\mathbb{Q}}\in\mathbb{Z}, \ \ ^\forall\mathfrak{y}\in L\}
\end{align*}
is isomorphic to $\mathrm{MWL}(f)$. 
\end{theorem}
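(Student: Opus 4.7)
The plan is to follow Shioda's original approach via restriction to the generic fibre. First I would establish the dictionary between sections of $f$ and elements of $\mathcal{F}(\mathbb{K})$: since $f$ is proper and $X$ is regular, the valuative criterion of properness shows that every $\mathbb{K}$-rational point of $\mathcal{F}$ extends uniquely to a section $\mathbb{P}^1\to X$, and conversely each section restricts to such a point. With the chosen base point $O$, this identifies $\mathcal{F}(\mathbb{K})$ with $\mathcal{J}_\mathcal{F}(\mathbb{K})$ as a group.

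Next I would analyze the restriction map $r:\mathrm{NS}(X)\to\mathrm{Pic}(\mathcal{F})$, post-composed with the degree-adjustment $D\mapsto r(D)-(\deg r(D))[O]$, giving a homomorphism $\bar r:\mathrm{NS}(X)\to\mathcal{J}_\mathcal{F}(\mathbb{K})$. Surjectivity is immediate since each $P\in\mathcal{F}(\mathbb{K})$ is hit by $(P)-(O)$. For the kernel, if $\bar r(D)=0$ then $r\bigl(D-m(O)\bigr)$ is principal on $\mathcal{F}$ for a suitable integer $m$, so $D-m(O)$ is linearly equivalent on $X$ to a divisor supported on fibres. Since such a divisor is a $\mathbb{Z}$-combination of irreducible fibre components, this gives $\ker\bar r=T$ and hence the first isomorphism. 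The torsion statement is then formal: because $X$ is rational, $\mathrm{NS}(X)$ is torsion free, so the torsion subgroup of $\mathrm{NS}(X)/T$ is precisely the saturation modulo $T$, i.e.\ $\hat T/T$.

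For the lattice statement, I would introduce the orthogonal projection $\pi:\mathrm{NS}(X)^-\otimes\mathbb{Q}\to L\otimes\mathbb{Q}$ with respect to the sign-changed intersection form. For $P\in\mathcal{F}(\mathbb{K})$, one checks that $\pi([(P)])$ lies in $L^*$, using that $\mathrm{NS}(X)^-$ is unimodular so any element of $\mathrm{NS}(X)^-$ pairs integrally with every element of $L\subset\mathrm{NS}(X)^-$. The map $P\mapsto\pi([(P)])$ vanishes on torsion (elements of $\hat T$ project to zero since $\pi$ annihilates $T^-\otimes\mathbb{Q}$), hence factors through $\mathcal{J}_\mathcal{F}(\mathbb{K})/\mathrm{tors}$. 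Comparing ranks and discriminants via the identity $\det(L)\det(T^-)=|\hat T/T|^2\cdot|\det\mathrm{NS}(X)^-|=|\hat T/T|^2$ then forces the induced map $\mathcal{J}_\mathcal{F}(\mathbb{K})/\mathrm{tors}\to L^*$ to be an isomorphism of abelian groups. Shioda's height pairing on $\mathcal{J}_\mathcal{F}(\mathbb{K})$ is defined precisely so that $\langle P,Q\rangle=\langle\pi([(P)]),\pi([(Q)])\rangle$, which upgrades the group isomorphism to an isometry $\mathrm{MWL}(f)\simeq L^*$.

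The main obstacle I expect is the kernel computation in the second step: one must show carefully that any divisor class whose generic-fibre restriction is trivial in $\mathcal{J}_\mathcal{F}(\mathbb{K})$ really does differ from a multiple of the zero section by a $\mathbb{Z}$-combination of fibre components, with no residual horizontal contribution. This relies on $f_*\mathcal{O}_X=\mathcal{O}_{\mathbb{P}^1}$ and on the fact that principal divisors on $\mathcal{F}$ arise from rational functions on $X$ modulo $f^*\mathbb{C}(\mathbb{P}^1)^{\times}$, both consequences of connectedness of the fibres; these are the precise ingredients that make the vertical adjustment possible.
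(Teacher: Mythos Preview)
The paper does not prove this theorem at all: it is stated as a citation of Shioda's results \cite{Shioda90}, \cite{Shioda99} and used as a black box throughout, so there is no ``paper's own proof'' to compare against. Your sketch is a faithful outline of Shioda's original argument and is correct in its essentials; the one formula that needs care is the discriminant relation you wrote as $\det(L)\det(T^-)=|\hat T/T|^2$, which should rather read $\det(T^-)=|\hat T/T|^2\det(\hat T^-)=|\hat T/T|^2\det(L)$ (using that $\hat T$ is primitive in the unimodular lattice $\mathrm{NS}(X)^-$), but this does not affect the conclusion.
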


Now, for a non-negative integer $d$, we put 
\begin{align*}
\Sigma_d = \{ ((X_0:X_1:X_2), (Y_0:Y_1))|X_1Y_1^d=X_2Y_0^d \} \subset \mathbb{P}^2\times\mathbb{P}^1
\end{align*}
and call it Hirzebruch surface of degree $d$. 
The restriction of the second projection to $\Sigma_d$ gives a structure of $\mathbb{P}^1$-bundle. 
We also remark that $\Sigma_0\simeq\mathbb{P}^1\times\mathbb{P}^1$. 
Conversely, any $\mathbb{P}^1$-bundle over $\mathbb{P}^1$ is isomorphic to $\Sigma_d$ for some $d$. 
We often consider on Zariski open defined by $X_0Y_0\not=0$ 
and take $(x,y)=(X_1/X_0, Y_1/Y_0)$ as an affine coordinate. 
Let $\Delta_{[d]}$ be a minimal section of $\Sigma_d$ defined by $x=0$ and 
$\Gamma_{[d]}$ the fibre defined by $y=0$. 

To clarify the structure of the Mordell-Weil lattice, 
we choose a ruling on $X$ and its relatively minimal model $\Sigma_d$ 
carefully so that we get a natural $\mathbb{Z}$-basis of $\mathrm{NS}(X)$ 
which gives us a simple presentation of $F$. 
This is done by choosing a birational morphism $\mu:X\to\Sigma_d$ 
which contracts step by step a $(-1)$-curve whose intersection number with $F$ 
is the smallest among all $(-1)$-curves. 
When $d=0$, we may assume that $\Gamma_{[0]}.\mu_*F<\Delta_{[0]}.\mu_*F$ without loss of generality. 
We call a $(-1)$-curve on $X$ a $(-1)$-section of $f$ if the intersection number with $F$ is equal to one. 
Then the following holds. 

\begin{theorem}[cf.~\cite{SS} and \cite{Viet00}]\label{Thm:PM}
Let $X$ be a smooth rational surface and $f:X\to\mathbb{P}^1$ a relatively minimal fibration of genus $g\geq1$. 
Assume that $\rho(X)=4g+6$ and $f$ has no multiple fibre when $g=1$. 
Then there exists a birational morphism $\mu:X\to\Sigma_d$ with $d\leq g+1$ such that 
the following conditions $(\mathrm{i})$, $(\mathrm{ii})$ hold. 
\begin{itemize}
\item[$(\mathrm{i})$] $\mu_*F$ is linearly equivalent to $(2\Delta_{[d]}+(g+d+1)\Gamma_{[d]})$. 
\item[$(\mathrm{ii})$] The pull-back to $X$ of a $(-1)$-curve contracted by $\mu$ intersects with $F$ at just one point. 
\end{itemize}
In particular, $F$ is a hyperelliptic curve and $f$ has at least one $(-1)$-section. 
\end{theorem}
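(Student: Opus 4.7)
The plan is to construct $\mu$ inductively by contracting at each step a $(-1)$-curve whose intersection number with the current image of $F$ is as small as possible, as already indicated in the construction outlined before the theorem.

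The main inductive claim I would first establish is this: on any smooth rational surface $Y$ arising during this procedure (in particular on $X$), if $\rho(Y) \geq 3$ and $F_Y$ denotes the direct image of $F$, then there exists a $(-1)$-curve $E \subset Y$ with $E \cdot F_Y = 1$. Since fibres of $f$ contain no $(-1)$-components by relative minimality, every $(-1)$-curve $E$ on $X$ satisfies $E \cdot F \geq 1$; the projection formula propagates this to each $Y$. The core of the claim is attainment of equality. Assuming toward a contradiction that every $(-1)$-curve on $Y$ has $F_Y$-intersection $\geq 2$, one extends to a full birational morphism to a minimal model, collects the data $\mu_*F \sim a\Delta_{[d]}+b\Gamma_{[d]}$ together with the multiplicities $m_j \geq 2$ at the $4g+4$ blown-up points, and derives a numerical contradiction via adjunction on $\Sigma_d$ and the Hodge index theorem, exploiting $(K_Y + F_Y)^2 = 0$ (preserved along the process because $\rho(X) = 4g+6$).

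Granting the claim, I would contract such an $E_1$ via $\mu_1 : X \to X^{(1)}$; since $E_1 \cdot F = 1$, the image $F^{(1)} = (\mu_1)_* F$ is smooth at the contracted point with $p_a(F^{(1)}) = g$, $\rho(X^{(1)}) = 4g+5$, and $K_{X^{(1)}} + F^{(1)}$ pulls back to $K_X + F$ (hence remains nef with zero self-intersection). Iterating $4g+4$ times, I would arrive at a minimal rational surface $S$ with $\rho(S) = 2$; should $S = \mathbb{P}^2$ occur, a single elementary transformation in the last step replaces it with $\Sigma_1$, so we may assume $S = \Sigma_d$ for some $d \geq 0$. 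Writing $\mu_* F \sim a \Delta_{[d]} + b \Gamma_{[d]}$ and using $(\mu_*F)^2 = 4g+4$ together with $p_a(\mu_* F) = g$, one obtains
\begin{align*}
-a^2 d + 2ab = 4g + 4, \qquad ad - 2a - 2b = -2g - 6.
\end{align*}
The value $a = 2$ is then forced by choosing the ruling on $S$ appropriately (swap the two rulings on $\Sigma_0$ when $d = 0$; invoke the minimality at each greedy step when $d \geq 1$), giving $b = g + d + 1$, and the bound $d \leq g+1$ follows from $0 \leq \Delta_{[d]} \cdot \mu_* F = g + 1 - d$ using irreducibility of $\mu_*F$ and $\mu_*F \neq \Delta_{[d]}$. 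Condition (ii) is then immediate from the construction via the projection formula, hyperellipticity of $F$ follows from the pull-back of $|\Gamma_{[d]}|$ cutting out a base-point-free $g^1_2$ on $F$, and $E_1$ provides a $(-1)$-section.

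The principal obstacle is the inductive existence claim: producing a $(-1)$-curve $E$ with $E \cdot F = 1$. It is here that the maximality of $\rho(X) = 4g+6$ plays an essential role, and a careful numerical analysis of possible classes $a\Delta_{[d]}+b\Gamma_{[d]}$ of $\mu_*F$ in combination with the Hodge index theorem is required.
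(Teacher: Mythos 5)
Your proposal has a genuine gap at exactly the point you yourself flag as ``the principal obstacle'': the inductive claim that on each intermediate surface $Y$ there is a $(-1)$-curve $E$ with $E\cdot F_Y=1$ is never proved, and this claim carries essentially the full weight of the theorem (it already contains the existence of a $(-1)$-section). The sketch you offer for it does not close: if you assume only that every $(-1)$-curve on the single surface $Y$ meets $F_Y$ with multiplicity $\geq 2$ and then extend arbitrarily to a morphism onto a minimal model, you are \emph{not} entitled to ``multiplicities $m_j\geq 2$ at the $4g+4$ blown-up points.'' The multiplicity at an infinitely near point is the intersection of the strict transform of the image of $F$ with the exceptional curve \emph{on the intermediate surface}, and the hypothesis at stage $Y$ says nothing about $(-1)$-curves on those later blow-downs; after one contraction the minimal $F$-degree of a $(-1)$-curve can drop back to $1$ without producing a $(-1)$-curve of $F$-degree $1$ upstairs on $Y$ (its strict transform need not be a $(-1)$-curve). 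So the ``adjunction plus Hodge index'' contradiction is not set up correctly, and it is not a routine computation left out: in the paper this is precisely where a serious external input enters. The paper's proof does not run the greedy induction at all; it chooses a ruling $|R|$ minimizing $(K_X+F)\cdot R$, uses nefness of $K_X+F$ (\cite{slope}) and the theorem of \cite{KK} and \cite{Kitagawa3} that $(K_X+F)\cdot R>0$ for the minimizing ruling forces $(K_X+F)^2>0$, contradicting $\rho(X)=4g+6-(K_X+F)^2=4g+6$; hence $(K_X+F)\cdot R=0$, so $F\cdot R=-K_X\cdot R=2$, and then Iitaka's $\#$-minimal model theory (\cite{Iitaka}, elementary transformations at singular points of the image curve) delivers a $\mu$ satisfying $(\mathrm{i})$ and $(\mathrm{ii})$ simultaneously. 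In particular every component of every fibre of that ruling is orthogonal to the nef class $K_X+F$, which is how $(-1)$-curves with $E\cdot F=1$ actually arise; your proposal would need this (or an equivalent) argument, not just numerics on $X$.

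A secondary, fixable-looking but unaddressed point: your two equations $-a^2d+2ab=4g+4$ and $ad-2a-2b=-2g-6$ do not force $a=2$; eliminating $b$ gives $(a-2)\bigl(a-(g+1)\bigr)=0$, so $a=g+1$ is an honest numerical solution, and on $\Sigma_d$ with $d\geq 1$ there is no second ruling to swap to. Ruling out $a=g+1$ by ``minimality at each greedy step'' is another unproved assertion; in the paper $a=2$ is immediate because the ruling was chosen with $(K_X+F)\cdot R=0$, which pins down $F\cdot R=2$ before any contraction is made. The remaining pieces of your outline (nefness and $(K+F)^2=0$ preserved under contracting an $E$ with $E\cdot F=1$, the bound $d\leq g+1$ from $0\leq\Delta_{[d]}\cdot\mu_*F=g+1-d$, hyperellipticity from the induced degree-two map) are fine, but they all sit downstream of the two gaps above.
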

\begin{proof}
When $g=1$, the assertion is well-known. 
In particular, we have that $f$ is anti-canonical map $\Phi_{|-K_X|}$ 
from the canonical bundle formula and the assumption, 
where $|D|$ means the complete linear system of $D$. 

Assume that $g\geq2$. 
We can find at least one base-point-free pencil of rational curves on $X$. 
We choose among them a pencil $|R|$ of rational curves with $R^2=0$ in such a way that $(K_X+F).R$ is minimal. 
Here, $(K_X+F).R$ is non-negative since  $(K_X+F)$ is nef. 
If $(K_X+F).R>0$, then we have $(K_X+F)^2>0$ from \cite{KK} and \cite{Kitagawa3}. 
It contradicts to $\rho(X)=4g+6$. 
Thus we have $(K_X+F).R=0$. 
We take a relatively minimal model of $X$ with respect to $\Phi_{|R|}$ and consider the image of $F$. 
Then we perform a succession of elementary transformations (\cite{Hartshorne}) at singular points of the image curve 
to arrive at a particular relatively minimal model called a $\#$-minimal model in \cite{Iitaka}. 
Let $\mu:(X, F)\to(\Sigma_d, \mu_*F)$ be a birational morphism to a $\#$-minimal model of $(X,F)$. 
Then we have that $\mu$ satisfies the conditions $(\mathrm{i})$, $(\mathrm{ii})$ 
from $F.R=-K_X.R=2$ and properties of $\#$-minimal model. 
\end{proof}

When $g=1$ and $f$ has no multiple fibre, 
we have $F.R=-K_X.R=2$ for all general fibres $R$ of rulings on $X$. 
Therefore, $f\times\Phi_{|R|}:X\to\varSigma_0:=\mathbb{P}^1\times\mathbb{P}^1$ is a generically finite double cover 
and the branch divisor is linearly equivalent to $(4\varDelta_{[0]}+2\varGamma_{[0]})$. 

We assume that $g\geq2$ and $\rho(X)=4g+6$. 
Any fibred rational surface $f:X\to\mathbb{P}^1$ can be considered as a subpencil 
$\Lambda\subset|2\Delta_{[d]}+(d+g+1)\Gamma_{[d]}|$ through 
a birational morphism as in Theorem~\ref{Thm:PM}. 
Conversely, any fibred rational surface is obtained from a subpencil 
$\Lambda\subset|2\Delta_{[d]}+(d+g+1)\Gamma_{[d]}|$ by blowing $\Sigma_d$ up at the $(4g+4)$ base points. 
Let $(O)$ be a $(-1)$-section of $f$ and $U^-$ a sublattice generated by $(O)$ and $F$ in $\mathrm{NS}(X)^-$. 
Then the orthogonal complement $(U^-)^\perp\subset\mathrm{NS}(X)^-$ is isomorphic to 
a unimodular lattice called $D_{4g+4}^+$ in \cite{CS} of rank $4g+4$, 
which contains $D_{4g+4}$ as the maximal root sublattice at index two. 
In fact, by choosing a $\mathbb{Z}$-basis of $(U^-)^\perp$ as in \cite{SS} and \cite{Viet00}, 
we have an extended Dynkin diagram in Figure~$\ref{fig:D_4g+4^+}$ below. 
Furthermore, all fibres of $f$ are irreducible if and only if $T=U$, and then we have 
$\mathrm{MWL}(f)\simeq D_{4g+4}^+$ from Theorem~\ref{Thm:MW}. 
We can really construct a such $f:X\to\mathbb{P}^1$. 
This case was studied in \cite{SS}. 

We take a birational morphism $\mu:X\to\Sigma_d$ as in Theorem~\ref{Thm:PM}. 
We have that $\mu^*\Gamma_{[d]}$ is linearly equivalent to $(K_X+F)/(g-1)$ and 
consider the ruling $\Phi_{|(K_X+F)/(g-1)|}:X\to\mathbb{P}^1$. 
All $(-1)$-sections of $f$ and $(-2)$-curves contained in fibres of $f$ are not intersect with $(K_X+F)$. 
Hence they are components of degenerate fibres of the ruling $\Phi_{|(K_X+F)/(g-1)|}$. 
Conversely, irreducible components of degenerate fibres of $\Phi_{|(K_X+F)/(g-1)|}$ is obtained from 
base points of a subpencil $\Lambda\subset|2\Delta_{[d]}+(d+g+1)\Gamma_{[d]}|$ by blowing $\Sigma_d$ up at them. 
Therefore, we have the following.

\begin{corollary}\label{Cor:DFofRuling}
Assume that $g\geq2$ and $\rho(X)=4g+6$. 
Then any degenerate fibre of the ruling $\Phi_{|(K_X+F)/(g-1)|}:X\to\mathbb{P}^1$ consists of 
$k$ $(-2)$-curves contained in a fibre of $f$ and one or two $(-1)$-sections of $f$, 
and the dual graph is Dynkin diagram of a root lattice $A_{k+2}$ as in {\upshape Figure}~$\ref{fig:DFofRulingA}$ 
or $D_{k+1}$ as in {\upshape Figure}~$\ref{fig:DFofRulingD}$. 
\begin{figure}[hbtp]
\begin{center}
\begin{picture}(235,58)
\setlength{\unitlength}{1.08pt}
\put(15,25){\makebox(0,0){Type $A_{k+2}:$}}
\put(80.5,50){\makebox(0,0){Case $k=0$}}
\put(71.3,10){\makebox(0,0){$1$}}
\put(93.8,10){\makebox(0,0){$1$}}
\put(65,10){\migite}
\put(87.5,10){\hidariashi}
%
%
\put(71,-2){\makebox(0,0){$\scriptstyle 1$}}
\put(93.5,-2){\makebox(0,0){$\scriptstyle 1$}}

\put(181.5,50){\makebox(0,0){Case $k\geq1$}}
\put(138.8,10){\makebox(0,0){$1$}}
\put(228.8,10){\makebox(0,0){$1$}}
\put(132.5,10){\migite}
\put(155,10){\migite}
\put(177.5,10){\tententen}
\put(177.5,10){\koreraha}
\put(200,10){\migite}
\put(222.5,10){\hidariashi}
%
%
\put(138.5,-2){\makebox(0,0){$\scriptstyle 1$}}
\put(161,-2){\makebox(0,0){$\scriptstyle 1$}}
\put(183.5,33){\makebox(0,0){$\scriptstyle k$}}
\put(206,-2){\makebox(0,0){$\scriptstyle 1$}}
\put(228.5,-2){\makebox(0,0){$\scriptstyle 1$}}
\end{picture}
\caption{}\label{fig:DFofRulingA}
\end{center}
\end{figure}
\begin{figure}[hbtp]
\begin{center}
\begin{picture}(270,58)
\setlength{\unitlength}{1.08pt}
\put(15,25){\makebox(0,0){Type $D_{k+1}:$}}
\put(81.5,50){\makebox(0,0){Case $k=2$}}
\put(83.8,10){\makebox(0,0){$1$}}
\put(55,10){\migite}\put(77.5,10){\migite}\put(100,10){\hidariashi}
%
%
\put(61,-2){\makebox(0,0){$\scriptstyle 1$}}
\put(83.5,-2){\makebox(0,0){$\scriptstyle 2$}}
\put(106,-2){\makebox(0,0){$\scriptstyle 1$}}

\put(205.25,50){\makebox(0,0){Case $k\geq3$}}
\put(151.3,10){\makebox(0,0){$1$}}
{\thicklines
\put(241.3,32.5){\circle{12}}
}
\qbezier(241.3,16.5)(241.3,21.5)(241.3,26.3)
\put(145,10){\migite}
\put(167.5,10){\migite}
\put(190,10){\tententen}
\put(190,10){\koreraha}
\put(212.5,10){\migite}
\put(235,10){\migite}
\put(257.5,10){\hidariashi}
%
\put(252.5,32.8){\makebox(0,0){$\scriptstyle 1$}}
\put(151,-2){\makebox(0,0){$\scriptstyle 2$}}
\put(173.5,-2){\makebox(0,0){$\scriptstyle 2$}}
\put(196,33){\makebox(0,0){$\scriptstyle k-3$}}
\put(218.5,-2){\makebox(0,0){$\scriptstyle 2$}}
\put(241,-2){\makebox(0,0){$\scriptstyle 2$}}
\put(263.5,-2){\makebox(0,0){$\scriptstyle 1$}}
\end{picture}
\caption{}\label{fig:DFofRulingD}
\end{center}
\end{figure}
Conversely, $(-1)$-sections of $f$ and $(-2)$-curves contained in fibres of $f$ are components of 
degenerate fibres of the ruling $\Phi_{|(K_X+F)/(g-1)|}:X\to\mathbb{P}^1$. 
\end{corollary}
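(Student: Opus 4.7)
The plan is to analyze the fibres of the ruling $\psi := \Phi_{|(K_X+F)/(g-1)|}:X\to\mathbb{P}^1$, which is a morphism since $(K_X+F)/(g-1) \sim \mu^*\Gamma_{[d]}$ is base-point-free. For any degenerate fibre $D$, I first extract the numerical data: $D^2 = 0$, $K_X.D = -2$ from $p_a(D) = 0$, and $F.D = F\cdot(K_X+F)/(g-1) = (2g-2)/(g-1) = 2$. Then for each irreducible component $C$ of $D$, the proportionality $D \sim \mu^*\Gamma_{[d]}$ gives $(K_X+F).C = 0$, and since $C$ is a smooth rational curve (as a component of a fibre of a $\mathbb{P}^1$-fibration) with $C^2 < 0$, adjunction yields $F.C = C^2 + 2$. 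Combined with $F.C \geq 0$, this forces $C^2 \in \{-1,-2\}$, so each component is either a $(-1)$-section of $f$ (when $C^2=-1$, giving $F.C=1$) or a $(-2)$-curve contained in a fibre of $f$ (when $C^2=-2$, giving $F.C=0$).

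Writing $D = \sum m_i S_i + \sum n_j E_j$ with $S_i$ the $(-1)$-section components and $E_j$ the $(-2)$-curve components, the equation $F.D=2$ reduces to $\sum m_i = 2$. This splits into two cases: (A) two distinct $(-1)$-sections each of multiplicity $1$, or (B) a unique $(-1)$-section of multiplicity $2$. In either case the dual graph of $D$ is a tree of rational curves, and the constraints $C.D = 0$ for each component $C$, together with the negative semidefiniteness of the intersection matrix (kernel spanned by $D$), pin down the graph. In case (A), I would iteratively contract one of the $(-1)$-sections, reducing to the same situation with smaller $k$ until a smooth rational fibre remains; the induction shows the $(-2)$-curves form a linear chain between the two $(-1)$-sections, yielding type $A_{k+2}$ with all multiplicities equal to $1$. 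In case (B), the relation $S.D = -2 + \sum n_j(S.E_j) = 0$ for the multiplicity-$2$ section $S$ gives $\sum n_j(S.E_j) = 2$; either $S$ meets two distinct $(-2)$-curves each of multiplicity $1$ (giving $k=2$ and type $D_3$), or $S$ meets one $(-2)$-curve of multiplicity $2$, in which case propagating the relations $E_i.D = 0$ down the chain forces exactly the multiplicity pattern of Figure~\ref{fig:DFofRulingD}.

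The converse, that every $(-1)$-section and every $(-2)$-curve contained in a fibre of $f$ appears as a component of some degenerate fibre of $\psi$, follows at once from $(K_X+F).C = 0$ for such $C$: this orthogonality to the ruling fibre class, together with $C^2 < 0$, forces $C$ to be a component of a reducible fibre of $\psi$. The main obstacle will be the combinatorial bookkeeping in case (B), in particular verifying that the recursion along the chain terminates in exactly the $D_{k+1}$ shape and not in some other tree; this will require careful analysis to rule out side branches at intermediate positions along the chain.
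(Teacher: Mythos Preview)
Your approach is correct and self-contained, but it differs from the paper's. The paper does not give a separate proof block for this corollary; instead, the short paragraph preceding it is the entire justification. There the author observes that $\mu^*\Gamma_{[d]}\sim (K_X+F)/(g-1)$, so the ruling $\Phi_{|(K_X+F)/(g-1)|}$ is nothing but the pullback of the ruling on $\Sigma_d$, and its degenerate fibres are precisely the total transforms of those fibres of $\Sigma_d$ that carry base points of the pencil $\Lambda\subset|2\Delta_{[d]}+(d+g+1)\Gamma_{[d]}|$. Condition~(ii) of Theorem~\ref{Thm:PM} (the $\#$-minimal property) guarantees that each blow-up centre is a simple point of the pencil, so the exceptional configuration over any fibre $\Gamma_{[d],0}$ is a chain of infinitely near points; whether the first infinitely near point lies in the tangent direction of $\Gamma_{[d],0}$ or not distinguishes the $D_{k+1}$ and $A_{k+2}$ pictures. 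The paper leaves this last step to the reader.

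By contrast, you argue intrinsically on $X$: from $(K_X+F).C=0$ and adjunction you extract $C^2\in\{-1,-2\}$ and $F.C=C^2+2$, then use $F.D=2$ to reduce to two multiplicity patterns and finish combinatorially. This buys you independence from the particular model $\mu$ and makes the role of the constraint $F.D=2$ transparent; the paper's route is shorter but leans on the reader's familiarity with blow-up bookkeeping. Your worry about ``side branches'' in case~(B) is unfounded: once you know the dual graph is a tree with transversal intersections, the relation $E.D=0$ at each multiplicity-$2$ vertex forces exactly one further multiplicity-$2$ neighbour or two multiplicity-$1$ leaves, so the $D_{k+1}$ shape is the only possible outcome and the recursion terminates cleanly.
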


The hyperelliptic involution of $f:X\to\mathbb{P}^1$ naturally induces a double cover as follows. 

\begin{corollary}\label{Cor:FDC}
If $g\geq2$ and $\rho(X)=4g+6$, then $f\times \Phi_{|(K_X+F)/(g-1)|}:X\to\varSigma_0$ is 
a generically finite double cover and the branch divisor is linearly equivalent to 
$((2g+2)\varDelta_{[0]}+2\varGamma_{[0]})$. 
Conversely, the finite double cover of $\varSigma_0$ branched along a reduced curve which is linearly equivalent to 
$((2g+2)\varDelta_{[0]}+2\varGamma_{[0]})$ with a minimal resolution of the singularity gives 
a hyperelliptic fibration of genus $g$ on a smooth rational surface whose Picard number is $(4g+6)$. 
\end{corollary}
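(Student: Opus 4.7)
The plan is to establish the two implications separately, using Theorem~\ref{Thm:PM} (which realizes $X$ as a blow-up of $\Sigma_d$ carrying a pencil in $|2\Delta_{[d]}+(g+d+1)\Gamma_{[d]}|$) and Corollary~\ref{Cor:DFofRuling}.

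For the direct assertion, by Theorem~\ref{Thm:PM} a general fibre $R$ of $\pi:=\Phi_{|(K_X+F)/(g-1)|}$ is linearly equivalent to $\mu^*\Gamma_{[d]}$, and hence satisfies $F\cdot R=2$. The combined map $\phi:=f\times\pi\colon X\to\Sigma_0$ is therefore surjective and generically finite of degree $2$. To identify the branch divisor, I would write $B\sim a\Delta_{[0]}+b\Gamma_{[0]}$ and observe that the two ruling classes of $\Sigma_0$ pull back to $F$ and $R$; with the convention $\phi^*\Delta_{[0]}\sim R$ and $\phi^*\Gamma_{[0]}\sim F$, the ramification formula $K_X=\phi^*K_{\Sigma_0}+R_X$ together with $2R_X=\phi^*B$ becomes
\[
K_X\;\sim\;-2F-2R+\tfrac{1}{2}(aR+bF).
\]
Intersecting with $F$ (using $K_X\cdot F=2g-2$, $F^2=0$, $F\cdot R=2$) yields $b=2g+2$, and intersecting with $R$ (using $K_X\cdot R=-2$, $R^2=0$) yields $a=2$, giving the stated branch class.

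For the converse, let $C\sim (2g+2)\Delta_{[0]}+2\Gamma_{[0]}$ be reduced. Since both coefficients are even, $\mathcal{O}_{\Sigma_0}(C)$ admits a square root $L\sim (g+1)\Delta_{[0]}+\Gamma_{[0]}$, so the double cover $Y\to\Sigma_0$ branched along $C$ is defined in the usual way; let $X\to Y$ be the minimal resolution. Rationality of $X$ follows from $p_g(X)=h^0(K_{\Sigma_0}+L)=h^0((g-1)\Delta_{[0]}-\Gamma_{[0]})=0$ (the $\Gamma_{[0]}$-coefficient is negative) and $q(X)=0$. The projection of $\Sigma_0$ whose fibres lie in $|\Gamma_{[0]}|$ induces a fibration $f\colon X\to\mathbb{P}^1$ whose general fibre is the smooth double cover of $\mathbb{P}^1$ branched at $C\cdot\Gamma_{[0]}=2g+2$ points, hence is hyperelliptic of genus $g$. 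The double cover formula $K_X^2=2(K_{\Sigma_0}+L)^2=-4(g-1)$ then yields $\rho(X)=10-K_X^2=4g+6$.

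The main obstacle lies in the converse: one must verify that $K_X^2=-4(g-1)$ persists for an arbitrary reduced branch curve $C$ and that $f$ is relatively minimal. For $K_X^2$, the point is that the minimal resolution of rational double points on $Y$ preserves the canonical self-intersection, so the double cover formula applies unchanged. For relative minimality, one argues that every exceptional divisor of $X\to Y$ and every additional component of a reducible fibre of $f$ arising from a special $\Gamma_{[0]}$ meeting the singular locus of $C$ is a $(-2)$-curve, so that no $(-1)$-curve is contained in any fibre of $f$.
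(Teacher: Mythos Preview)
The paper states Corollary~\ref{Cor:FDC} without proof, treating it as an immediate consequence of the hyperelliptic structure recorded in Theorem~\ref{Thm:PM} and Corollary~\ref{Cor:DFofRuling}; there is no argument in the paper to compare yours against. Your outline is essentially correct, but two points need to be tightened.

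In the forward direction there is a slip: with your conventions $\phi^*\varDelta_{[0]}\sim R$ and $\phi^*\varGamma_{[0]}\sim F$, intersecting $K_X$ with $F$ gives $a-4=2g-2$, hence $a=2g+2$, and intersecting with $R$ gives $b-4=-2$, hence $b=2$; you have the roles of $a$ and $b$ interchanged. Note also that $\phi$ is not finite---it contracts precisely the $(-2)$-curves described in Corollary~\ref{Cor:DFofRuling}---so the formula $K_X\equiv\phi^*K_{\varSigma_0}+\tfrac12\phi^*B$ is not automatic. It is nevertheless valid for the intersections you need, since every $\phi$-exceptional curve lies in a fibre of both $f$ and $\pi$ and is therefore orthogonal to $F$ and to $R$; alternatively, $B\cdot\varGamma_{[0]}=2g+2$ and $B\cdot\varDelta_{[0]}=2$ follow directly from Riemann--Hurwitz applied to the general fibres $F\to\varGamma_{[0]}$ and $R\to\varDelta_{[0]}$.

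In the converse the genuine gap is the unproved assertion that $Y$ has only rational double points; without it neither $K_X^2=2(K_{\varSigma_0}+L)^2$ nor the relative-minimality claim is justified. The missing observation is that $C\cdot\varDelta_{[0]}=2$ forces every point of $C$ to have multiplicity at most~$2$, and a reduced plane curve germ of multiplicity two is analytically $y^2=x^{n+1}$, whose double cover $z^2=y^2-x^{n+1}$ is an $A_n$ surface singularity. Once this is known, $K_X\equiv\phi^*(K_{\varSigma_0}+L)$, so every irreducible component $\Theta$ of a fibre of $f$ satisfies $K_X\cdot\Theta\ge 0$ (equal to $0$ if $\Theta$ is exceptional, and to $d(g-1)$ with $d\in\{1,2\}$ otherwise), ruling out $(-1)$-curves in fibres. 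Rationality of $X$ then also follows, but it is more direct to note that the projection $\Phi_{|\varDelta_{[0]}|}$ induces on $X$ a pencil whose general member is a double cover of $\mathbb{P}^1$ branched at $C\cdot\varDelta_{[0]}=2$ points, i.e.\ a ruling; this avoids the separate verification of $q(X)=0$.
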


%
%
\section{Main theorem}
Let $X$ be a smooth projective rational surface with $\rho(X)=4g+6$ and 
$f:X\to\mathbb{P}^1$ a relatively minimal fibration of genus $g\geq1$. 
When $g=1$, we assume that $f$ has no multiple fibre, or $f$ has a section. 
Furthermore, we consider an extremal case as follows.

\begin{theorem}\label{MT}
Let $f:X\to\mathbb{P}^1$ be as above and $\mathbb{K}=f^*(\mathbb{C}(\mathbb{P}^1))$. 
Then the following assertions are equivalent. 
\begin{itemize}
\item[$(1)$]
Mordell-Weil group of $f$ is trivial.
\item[$(2\mathrm{a})$]
$f$ has a reducible fibre whose dual graph is as in {\upshape Figure~$\ref{fig:VIII-4}$}. 
\begin{figure}[hbtp]
\begin{center}
\begin{picture}(189.5,38)
\setlength{\unitlength}{1.08pt}
{\thicklines
\put(118.8,32.5){\circle{12}}
\qbezier(157.5,10)(157.5,16.5)(173.5,16.5)
\qbezier(157.5,10)(157.5,3.5)(173.5,3.5)
\qbezier(189.5,10)(189.5,16.5)(173.5,16.5)
\qbezier(189.5,10)(189.5,3.5)(173.5,3.5)
}
\qbezier(118.8,16.5)(118.8,21.5)(118.8,26.3)
\put(0,10){\migite}
\put(22.5,10){\migite}
\put(45,10){\tententen}
\put(45,10){\koreraha}
\put(67.5,10){\migite}
\put(90,10){\migite}
\put(112.5,10){\migite}
\put(135,10){\migite}
\put(173.5,10){\makebox(0,0){\footnotesize $g+1$}}
\put(138,32.8){\makebox(0,0){$\scriptstyle 2g+1$}}
\put(6,-2){\makebox(0,0){$\scriptstyle 1$}}
\put(28.5,-2){\makebox(0,0){$\scriptstyle 2$}}
\put(51,33){\makebox(0,0){$\scriptstyle 4g-1$}}
\put(73.5,-2){\makebox(0,0){$\scriptstyle 4g$}}
\put(96,-2){\makebox(0,0){$\scriptstyle 4g+1$}}
\put(118.5,-2){\makebox(0,0){$\scriptstyle 4g+2$}}
\put(141,-2){\makebox(0,0){$\scriptstyle 2g+2$}}
\put(173.5,23){\makebox(0,0){$\scriptstyle 2$}}
\end{picture}
\caption{}\label{fig:VIII-4}
\end{center}
\end{figure}
\item[$(2\mathrm{b})$]
$f$ has a reducible fibre whose dual graph contains the extended Dynkin diagram of 
a unimodular integral lattice $D_{4g+4}^+$ as in {\upshape Figure~$\ref{fig:D_4g+4^+}$} as a subgraph. 

\begin{figure}[hbtp]
\begin{center}
\begin{picture}(189.5,33)
\setlength{\unitlength}{1.08pt}
{\thicklines
\put(118.8,27.5){\circle{12}}
\qbezier(157.5,5)(157.5,11.5)(173.5,11.5)
\qbezier(157.5,5)(157.5,-1.5)(173.5,-1.5)
\qbezier(189.5,5)(189.5,11.5)(173.5,11.5)
\qbezier(189.5,5)(189.5,-1.5)(173.5,-1.5)
}
\qbezier(118.8,11.5)(118.8,16.5)(118.8,21.3)
\put(0,5){\migite}
\put(22.5,5){\migite}
\put(45,5){\tententen}
\put(45,5){\koreraha}
\put(67.5,5){\migite}
\put(90,5){\migite}
\put(112.5,5){\migite}
\put(135,5){\migite}
\put(173.5,5){\makebox(0,0){\footnotesize $g+1$}}
%
\put(51,28){\makebox(0,0){$\scriptstyle 4g-2$}}
\end{picture}
\caption{}\label{fig:D_4g+4^+}
\end{center}
\end{figure}
\item[$(3)$]
$X$ has a unique ruling and possibilities of its relatively minimal models are 
Hirzebruch surfaces $\Sigma_g$ of degree $g$ and $\Sigma_{g+1}$ of degree $g+1$ only. 
In particular, there exists no birational morphisms $X\to\mathbb{P}^2$ if $g\geq2$. 
\item[$(4\mathrm{a})$]
$f:X\to\mathbb{P}^1$ is obtained from $\Sigma_g$ by eliminating the base points of the following pencil $\Lambda:$ 
Let $\Delta_{[g]}$ be the minimal section and $\Gamma_{[g], 0}$ a fibre of $\Sigma_g$. 
Take a curve $H_{[g]}$ which is linearly equivalent to $(2\Delta_{[g]}+(2g+1)\Gamma_{[g],0})$ 
and which is tangent to $\Gamma_{[g]}, 0$ at the intersection point of $\Gamma_{[g], 0}$ with $\Delta_{[g]}$. 
Then $(2\Delta_{[g]}+(2g+1)\Gamma_{[g], 0})$ and $H_{[g]}$ generate the pencil $\Lambda$. 
\item[$(4\mathrm{b})$]
There exist elements $t$, $x$, $y$ in $\mathbb{C}(X)$ and complex numbers $c_{i,j}$, $i=0,1,2$, $j=0, 1, \ldots, ig+1$ 
such that they satisfy the following$:$ 

$(\mathrm{I})$ \ $\mathbb{C}(X)=\mathbb{C}(x,y)$ and $\mathbb{K}=\mathbb{C}(t)$. 

$(\mathrm{II})$ \ $x^2y^{2g+1}=t\sum c_{i,j}x^iy^j$, \ \ \ $c_{1,0}=c_{0,0}=0$ and $c_{2,0}c_{0,1}\not=0$.

\item[$(5\mathrm{a})$]
$f:X\to\mathbb{P}^1$ is obtained from $\varSigma_0=\mathbb{P}^1\times\mathbb{P}^1$ as follows$:$ 
Let $\varDelta_{[0],0}$ be a section of $\varSigma_0$ with ${\varDelta_{[0],0}}^2=0$ and 
$\varGamma_{[0],0}$ a fibre of $\varSigma_0$. 
Take a curve $B_{[0],0}$ which is linearly equivalent to $((2g+1)\varDelta_{[0],0}+\varGamma_{[0],0})$ 
and which has a contact of order $(2g+1)$ with $\varGamma_{[0],0}$ 
at the intersection point of $\varGamma_{[0],0}$ with $\varDelta_{[0],0}$. 
Let $\pi^\flat:X^\flat\to\varSigma_0$ be the finite double cover branched along 
$(\varDelta_{[0],0}+\varGamma_{[0],0}+B_{[0],0})$. 
Then $X^\flat$ has only one rational double point of type $D_{4g+4}$ as its singularity. 
Take $\varsigma:X\to X^\flat$ be a minimal resolution of the singularity. 
Define $f:X\to\mathbb{P}^1$ as the composite of $\pi^\flat\circ\varsigma$ and 
the projection $\Phi_{|\varGamma_{[0],0}|}$ of $\varSigma_0$. 
\item[$(5\mathrm{b})$]
There exist $(2g+1)$ complex numbers $b_{1,1}, \ldots, b_{1,2g+1}$, two non-zero complex numbers $b_{0,2g+1}$, $b_{1,0}$ 
and $t$ in $\mathbb{K}$ such that the following holds$:$ 
$\mathbb{K}=\mathbb{C}(t)$ and $\mathbb{C}(X)$ is isomorphic to the quotient field of $\mathbb{C}[t,y,z]/(\psi(t,y,z))$, 
where

\hspace*{\fill}
$\psi(t,y,z)=z^2-ty(b_{0,2g+1}y^{2g+1}+b_{1,0}t+ty\sum_{j=1}^{2g+1}b_{1,j}y^{j-1})$. 
\hspace*{\fill}
\end{itemize}
\end{theorem}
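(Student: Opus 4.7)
The plan is to organize the eight conditions into three thematic blocks and prove equivalences block-by-block: the lattice-theoretic equivalences $(1)\Leftrightarrow(2\mathrm{a})\Leftrightarrow(2\mathrm{b})$, the ruling equivalence $(1)\Leftrightarrow(3)$, and then the explicit geometric constructions $(1)\Leftrightarrow(4\mathrm{a})\Leftrightarrow(4\mathrm{b})$ and $(1)\Leftrightarrow(5\mathrm{a})\Leftrightarrow(5\mathrm{b})$. Throughout, the case $g\geq 2$ is primary; the $g=1$ case is handled by the canonical-bundle formula identification of $f$ with the anticanonical map.

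For the lattice block, I apply Theorem~\ref{Thm:MW}. Triviality of $\mathcal{J}_\mathcal{F}(\mathbb{K})$ is equivalent to $T=\mathrm{NS}(X)$, which by rank considerations forces $\sum_t(v_t-1)=4g+4$ and by discriminant considerations forces $T^-$ to be unimodular. Since $(U^-)^\perp\simeq D_{4g+4}^+$ and $D_{4g+4}^+/D_{4g+4}$ has order two, the reducible-fibre components must span a sublattice of index dividing two in $D_{4g+4}^+$, the missing glue vector being supplied by a half-integral combination involving $(O)$. Weighting the admissible affine Dynkin contributions against the multiplicities forced by the arithmetic-genus-$g$ condition on a fibre of $f$ then pins down the dual graph of the unique reducible fibre to Figure~\ref{fig:VIII-4}, which gives $(2\mathrm{a})$. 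The implication $(2\mathrm{a})\Rightarrow(2\mathrm{b})$ is then by direct inspection of the subgraph, while $(2\mathrm{b})\Rightarrow(2\mathrm{a})$ follows because the numerical constraints of a hyperelliptic genus-$g$ fibre containing the diagram of Figure~\ref{fig:D_4g+4^+} leave no room for extra components.

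For $(1)\Leftrightarrow(3)$, I invoke Corollary~\ref{Cor:DFofRuling}: when $\mathcal{J}_\mathcal{F}(\mathbb{K})$ is trivial, every $(-1)$-section of $f$ and every $(-2)$-curve in a fibre of $f$ occurs in a degenerate fibre of the ruling $\Phi_{|(K_X+F)/(g-1)|}$, so any second ruling would produce $(-1)$-sections of $f$ outside $T$, contradicting $T=\mathrm{NS}(X)$; uniqueness of the ruling follows. The relatively minimal model is determined by whether the degenerate fibres are of type $A_{k+2}$ (giving $\Sigma_g$) or $D_{k+1}$ (giving $\Sigma_{g+1}$), matching $(3)$. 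The non-existence of $X\to\mathbb{P}^2$ when $g\geq 2$ is a direct consequence, since such a morphism would give a relatively minimal model distinct from $\Sigma_g,\Sigma_{g+1}$. Conversely, the pencil structure of Theorem~\ref{Thm:PM} applied to $\Sigma_g$ or $\Sigma_{g+1}$ recovers the fibre configuration of $(2\mathrm{a})$.

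The constructions $(4\mathrm{a})$ and $(5\mathrm{a})$ are the explicit recipes underlying Theorem~\ref{Thm:PM} and Corollary~\ref{Cor:FDC} respectively, once the tangency of $H_{[g]}$ to $\Gamma_{[g],0}$ at $\Gamma_{[g],0}\cap\Delta_{[g]}$ (resp.\ the order-$(2g+1)$ contact of $B_{[0],0}$ with $\varGamma_{[0],0}$) is imposed: direct blow-up computation in $\Sigma_g$, respectively resolution of the $D_{4g+4}$-singularity on the double cover of $\varSigma_0$, recovers the fibre of Figure~\ref{fig:VIII-4} and its embedded $D_{4g+4}^+$-configuration. Finally, $(4\mathrm{b})$ and $(5\mathrm{b})$ follow by writing the pencil $\Lambda$ in affine coordinates $(x,y)$ on $\Sigma_g$, respectively the double cover as a hypersurface in coordinates $(t,y,z)$, where the conditions $c_{1,0}=c_{0,0}=0$, $c_{2,0}c_{0,1}\neq 0$ and $b_{0,2g+1},b_{1,0}\neq 0$ translate the tangency/contact conditions into coefficient constraints. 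The main obstacle is the rigidity step in the lattice block, namely the case analysis showing that no affine ADE configuration other than that of Figure~\ref{fig:VIII-4} can yield a unimodular trivial lattice compatible with a hyperelliptic genus-$g$ fibre; this is where the bulk of the combinatorial work lies.
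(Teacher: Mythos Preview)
Your high-level organization is reasonable, but two of the implications that carry most of the content are not actually argued, and one is incorrectly reasoned.

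For $(1)\Rightarrow(3)$, your claim that ``any second ruling would produce $(-1)$-sections of $f$ outside $T$'' does not make sense: if $T=\mathrm{NS}(X)$ then nothing lies outside $T$, and a second ruling would in any case produce $(-1)$-\emph{curves}, not a priori $(-1)$-sections of $f$. What is actually needed---and what the paper supplies in part~(III) of Lemma~\ref{Lem:2b}---is the inequality $C^2\geq 0$ for every irreducible curve $C$ other than the unique $(-1)$-section and the components of the special fibre. This is proved by writing $C$ in the basis $\Delta_{[g]},\Gamma_{[g],0},E_1,\ldots,E_{4g+4}$ and using the chain of inequalities $m_1\geq m_2\geq\cdots\geq m_{4g+4}\geq 0$, $\alpha\geq m_1+m_2$, $\beta\geq \alpha g+m_1$ coming from non-negative intersection with the known negative curves. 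Only with this in hand can you conclude that there are no other $(-1)$-curves, hence no other blow-down sequences, hence a unique ruling.

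For $(3)\Rightarrow(2\mathrm{a})$ you write only that ``the pencil structure of Theorem~\ref{Thm:PM} applied to $\Sigma_g$ or $\Sigma_{g+1}$ recovers the fibre configuration'', which is not an argument. The paper's Lemma~\ref{Lem:3} is where the real work happens: one first manufactures a birational morphism to $\Sigma_g$ by an elementary transformation from $\Sigma_{g+1}$, then observes that the unique base point of $\Lambda$ on $\Sigma_g$ must lie on $\Delta_{[g]}$ (otherwise an elementary transformation would produce a forbidden morphism to $\Sigma_{g-1}$), and finally rules out the type-$A_{k+2}$ shape for the unique degenerate fibre of the ruling by exhibiting, in that case, an explicit contraction $X\to\Sigma_{g-1}$. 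Your lattice-theoretic route to $(1)\Rightarrow(2\mathrm{a})$ is in principle a legitimate alternative to the paper's geometric argument via Lemma~\ref{Lem:1}, but you have not confronted the fact that the special fibre contains a component of self-intersection $-(g+1)$, so the orthogonal complement of $U^-$ in $T^-$ is \emph{not} a root lattice and no ``affine ADE'' classification applies directly; similarly, your $(2\mathrm{b})\Rightarrow(2\mathrm{a})$ by ``numerical constraints leave no room'' skips the genuine case analysis the paper carries out in Lemma~\ref{Lem:2} via Corollary~\ref{Cor:DFofRuling}.
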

In order to show Theorem~\ref{MT}, we prove some lemmas. 
\begin{lemma}\label{Lem:2b}
Assume that $f$ has a reducible fibre whose dual graph is as in {\upshape Figure}~$\ref{fig:VIII-4}$. 
Then $(\mathrm{I})$, $(\mathrm{II})$, $(\mathrm{III})$ and $(\mathrm{IV})$ hold$:$ 
\begin{itemize}
\item[$(\mathrm{I})$]
There exists a birational morphism $\mu:X\to\Sigma_g$ such that the pencil $\Lambda$ as in 
$(4\mathrm{a})$ in {\upshape Theorem~$\ref{MT}$} is obtained from a base-point-free pencil $|F|$ as images by $\mu$. 
\item[$(\mathrm{II})$]
Mordell-Weil group of $f$ is trivial. 
In particular, a $(-1)$-section of $f$ is unique. 
\item[$(\mathrm{III})$]
If an irreducible reduced curve $C$ on $X$ is neither the $(-1)$-section 
nor any component of the reducible fibre of $f$, then $C^2\geq0$. 
\item[$(\mathrm{IV})$]
$X$ has a unique ruling. 
Furthermore, $\Sigma_g$ and $\Sigma_{g+1}$ only can be its relatively minimal model. 
The both models are obtained from birational morphisms as in {\upshape Theorem~$\ref{Thm:PM}$}. 
\end{itemize}
\end{lemma}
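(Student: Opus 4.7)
The strategy is to establish (II) from the Mordell-Weil rank formula \eqref{Eq:MWrk} and Theorem~\ref{Thm:MW}, use this to control negative curves on $X$ in (III), and then locate birational contractions via Theorem~\ref{Thm:PM} for (I) and (IV).

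First I would read off from Figure~\ref{fig:VIII-4} the number $v$ of irreducible components of the reducible fibre, which comes out to $v=4g+5$. Together with $\rho(X)=4g+6$, the formula \eqref{Eq:MWrk} then gives $\mathrm{rk}\,\mathcal{J}_\mathcal{F}(\mathbb{K})\leq(4g+6)-2-(v-1)=0$, and the inequality forces every other fibre of $f$ to be irreducible. To eliminate the torsion $\hat T/T$ appearing in Theorem~\ref{Thm:MW}, I would fix a $(-1)$-section (which exists by Theorem~\ref{Thm:PM}) as the zero-section $(O)$, write down the Gram matrix of the trivial lattice $T=\langle (O),\,F,\,\text{components of the reducible fibre}\rangle$ using the multiplicities and intersection pattern of Figure~\ref{fig:VIII-4}, and verify $|\det T|=1$; this forces $T=\hat T$, yielding (II). The uniqueness of the $(-1)$-section then follows from the bijection between sections of $f$ and $\mathbb{K}$-rational points of $\mathcal{F}$: triviality of $\mathcal{J}_\mathcal{F}(\mathbb{K})$ reduces the set of sections to $\{(O)\}$.

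For (III), let $C$ be irreducible reduced, distinct from $(O)$ and from every component of the distinguished fibre, with $C^2<0$. If $C.F=0$, then $C$ lies in a fibre of $f$; by (II) all other fibres are irreducible, so $C$ would be a smooth irreducible fibre with $C^2=0$, contradicting $C^2<0$. Hence $C.F>0$, and relative minimality forbids $(-1)$-curves in fibres; adjunction combined with $C^2<0$ then forces $C^2=-1$ and $C.F=1$, so $C$ is a $(-1)$-section distinct from $(O)$, contradicting (II).

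For (I), Theorem~\ref{Thm:PM} produces a birational morphism $\mu:X\to\Sigma_d$ with $d\leq g+1$ contracting a sequence of $(-1)$-curves, each meeting $F$ at a single point. By (II) and (III) each such $(-1)$-curve is either the unique zero-section $(O)$ or a $(-1)$-curve created through successive elementary transformations at base points sitting on components of the distinguished fibre. Starting with the contraction of $(O)$ and tracking the chain of $(-2)$-curves of Figure~\ref{fig:VIII-4} through these elementary transformations, the increasing-multiplicity chain matches the proper transforms of $\Delta_{[d]}$ precisely when $d=g$; one then identifies the image pencil with $\Lambda$ of (4a), the tangency of $H_{[g]}$ to $\Gamma_{[g],0}$ at $\Gamma_{[g],0}\cap\Delta_{[g]}$ being forced by the maximal multiplicity $4g+2$ appearing in the chain. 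For (IV), replacing $(O)$ by a different extremal $(-1)$-curve at the start of the contraction sequence produces instead $\Sigma_{g+1}$, while uniqueness of the ruling follows from Corollary~\ref{Cor:DFofRuling} together with (II) and (III), since any alternative ruling would produce further degenerate fibres whose $(-2)$-chains and $(-1)$-sections cannot be accommodated by Figure~\ref{fig:VIII-4}. The main technical hurdle will be the bookkeeping in (I): carefully tracking the successive $(-1)$-contractions so as to pin down $d=g$ rather than some smaller value, and verifying that the tangent base-point configuration on $\Sigma_g$ is uniquely determined.
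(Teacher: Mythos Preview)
Your argument for (III) has a genuine gap. From $C^2<0$ and $C.F>0$ you invoke ``adjunction'' to conclude $C^2=-1$ and $C.F=1$, but adjunction only gives $C^2+K_X.C=2p_a(C)-2\ge -2$, which does not by itself exclude, say, a smooth rational curve with $C^2=-2$, $K_X.C=0$ and $C.F>0$, or more generally $C^2=-n$, $K_X.C=n-2$ for $n\ge2$. Nothing in your sketch rules such curves out. The paper handles (III) in a completely different way: it first proves (I), thereby obtaining an explicit $\mathbb{Z}$-basis $\Delta_{[g]},\Gamma_{[g],0},E_1,\dots,E_{4g+4}$ of $\mathrm{NS}(X)$; writing $C\sim\alpha\Delta_{[g]}+\beta\Gamma_{[g],0}-\sum m_iE_i$ and using $C.E_{4g+4}\ge0$, $C.\Theta_k\ge0$ for all $k$, it derives $\alpha\ge m_1+m_2$, $\beta\ge\alpha g+m_1$, $m_1\ge m_2\ge\cdots\ge m_{4g+4}\ge0$, and then computes directly $C^2\ge(g+1)(m_1+3m_2)(m_1-m_2)\ge0$. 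So the logical order in the paper is (I)$\to$(II)$\to$(III)$\to$(IV), not (II)$\to$(III)$\to$(I) as you propose; your reordering breaks down precisely because (III) needs the explicit contraction data from (I).

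A smaller issue: in (IV) you say one obtains $\Sigma_{g+1}$ by ``replacing $(O)$ by a different extremal $(-1)$-curve at the start''. But by (II) there is only one $(-1)$-section, so there is no alternative starting curve. The paper instead performs the elementary transformation of $\Sigma_g$ at $\mu(E_1)\in\Delta_{[g]}$, which on the level of $X$ amounts to contracting $\Theta_{4g+3}$ in place of $\Theta_{4g+2}$ at the final step; the starting $(-1)$-curve is still $E_{4g+4}=(O)$. Your approach to (II) via $|\det T|=1$ is fine and essentially equivalent to the paper's basis-change argument, and your outline of (I) is correct in spirit though the paper carries it out more concretely by contracting $E_{4g+4},\Theta_0,\Theta_1,\dots,\Theta_{4g+2}$ in order and reading off $(\mu_*\Theta_{4g+4})^2=-g$.
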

\begin{proof}
$(\mathrm{I}):$ \ 
Let $\Theta_k$, $k=0,1, \cdots, 4g+4$ be components of the reducible fibre such that 
\begin{align*}
(\Theta_{i-1}.\Theta_{j-1})_{1 \leq i, j \leq 4g+5}=
\left(
\begin{array}{cccccccc}
-2 & 1 & 0 & \cdots & 0 & 0 & 0 & 0\\
1 & -2 & \ddots & \ddots & \vdots & \vdots & \vdots & \vdots\\
0 & \ddots & \ddots & 1 & 0 & \vdots & \vdots & \vdots\\
\vdots & \ddots & 1 & -2 & 1 & 0 & 0 & 0\\
0 & \cdots & 0 & 1 & -2 & 1 & 1 & 0\\
0 & \cdots & \cdots & 0 & 1 & -2 & 0 & 1\\
0 & \cdots & \cdots & 0 & 1 & 0 & -2 & 0\\
0 & \cdots & \cdots & 0 & 0 & 1 & 0 &-g-1
\end{array}
\right).
\end{align*}
We show that $f$ has a $(-1)$-section $E_{4g+4}$ from Theorem~$\ref{Thm:PM}$. 
Since $\Theta_0$ is a unique component whose multiplicity is one, $E_{4g+4}$ intersect with $\Theta_0$. 
We contract $E_{4g+4}, \Theta_0, \Theta_1, \ldots, \Theta_{4g+2}$ in turn. 
Since $\rho(X)=4g+6$, the image of $X$ by this birational morphism $\mu$ is $\Sigma_d$ for some $d$. 
Then $(\mu_*\Theta_{4g+3})^2=0$ and $(\mu_*\Theta_{4g+4})^2=-g$. 
Hence we have $d=g$ and $\mu_*\Theta_{4g+4}=\Delta_{[g]}$. 
Let $\Gamma_{[g], 0}$ be a fibre $\mu_*\Theta_{4g+3}$ of $\Sigma_g$. 
In fact, $\mu$ is a birational morphism as in Theorem~$\ref{Thm:PM}$. 
We take the image by $\mu$ of a general fibre of $f$ as $H_{[g]}$. 
Then the first assertion follows. 

$(\mathrm{II}):$ \ 
We consider $\mu$ as in the proof of $(\mathrm{I})$. 
Let $E_i$, $i=1,2,\ldots,4g+4$ be the pull-back to $X$ of $(4g+4)$ $(-1)$-curves contracted by 
the birational morphism $\mu:X\to\Sigma_g$. 
We may assume that $\Theta_k=E_{4g+3-k}-E_{4g+4-k}$, $k=0,1,\ldots,4g+2$. 
For simplicity, we denote the pull-back to $X$ of $\Delta_{[g]}$ and $\Gamma_{[g], 0}$ by the same symbols. 
We remark that $\Theta_{4g+3}=\Gamma_{[g], 0}-E_1-E_2$ and $\Theta_{4g+4}=\Delta_{[g]}-E_1$. 
Since 
$\mathrm{NS}(X)=\mathbb{Z}\Delta_{[g]}\oplus\mathbb{Z}\Gamma_{[g], 0}\oplus\bigoplus_{i=1}^{4g+4}\mathbb{Z}E_i$, 
we see that $E_{4g+4}$ and $\Theta_k$, $k=0,1,\ldots,4g+4$ also form $\mathbb{Z}$-basis of $\mathrm{NS}(X)$. 
Furthermore, we have that $\Theta_1, \Theta_2, \ldots, \Theta_{4g+4}$, $F$ and $E_{4g+4}$ also form 
$\mathbb{Z}$-basis of $\mathrm{NS}(X)$ by considering 
\begin{align*}
\Theta_0=F-\sum_{k=1}^{4g+1}(k+1)\Theta_k-(2g+2)\Theta_{4g+2}-(2g+1)\Theta_{4g+3}-2\Theta_{4g+4}. 
\end{align*}
Therefore, Mordell-Weil group of $f$ is trivial from Theorem~\ref{Thm:MW}. 

$(\mathrm{III}):$ \ 
Keep the notation as above. 
Let $C$ be an irreducible reduced curve on $X$. 
We put $C\sim\alpha\Delta_{[g]}+\beta\Gamma_{[g], 0}-\sum_{i=1}^{4g+4}m_iE_i$ 
for some integers $\alpha$, $\beta$ and $m_i$, $i=1,2,\ldots, 4g+4$. 
We assume that $C$ is neither $E_{4g+4}$ nor $\Theta_k$ for all of $k$. 
Since $C.E_{4g+4}$ and $C.\Theta_k$ are non-negative, 
we have $\alpha\geq m_1+m_2$, $\beta\geq \alpha g+m_1$ and 
$m_1\geq m_2\geq\cdots\geq m_{4g+4}\geq0$. 
Then 
\begin{align*}
C^2&\geq -\alpha ^2 g+2\alpha\beta -m_1^2-(4g+3)m_2^2\\
   &\geq  \alpha ^2 g+2\alpha m_1-m_1^2-(4g+3)m_2^2\\
   &\geq  (m_1+m_2)^2g+2(m_1+m_2)m_1-m_1^2-(4g+3)m_2^2\\
   &= (g+1)(m_1+3m_2)(m_1-m_2)\\
   &\geq 0. 
\end{align*}

$(\mathrm{IV}):$ \ 
We consider the birational morphism $\mu:X\to\Sigma_g$ as in the proof of $(\mathrm{I})$ and keep the notation. 
In particular, we remark that $\mu(E_1)$ is on $\Delta_{[g]}$. 
By performing an elementary transformation $\Sigma_g\dashrightarrow\Sigma_{g+1}$ at $\mu(E_1)$, 
we have another birational morphism $\mu':X\to\Sigma_{g+1}$ as the composite of $\mu$ and it. 
Here $\mu'$ contracts $\Theta_{4g+3}$ in place of $\Theta_{4g+2}$. 
In fact, $\mu$ and $\mu'$ give relatively minimal models of the same ruling defined by $|\sum_{k=0}^{4g+3}\Theta_k|$. 
From $(\mathrm{II})$ and $(\mathrm{III})$, there are no other birational morphisms $X\to\Sigma_d$. 
Thus the assertion $(\mathrm{IV})$ follows. 
\end{proof}
Next,  we show the converse of $(\mathrm{II})$ in {\upshape Lemma}~\ref{Lem:2b}: 
\begin{lemma}\label{Lem:1}
Assume that Mordell-Weil group of $f$ is trivial. 
Then $f$ has a unique reducible fibre and the dual graph is as in {\upshape Figure~$\ref{fig:VIII-4}$}. 
\end{lemma}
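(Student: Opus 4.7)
The plan is to translate the triviality of $\mathcal{J}_\mathcal{F}(\mathbb{K})$ into the lattice identity $\bigoplus_v R_v^- = D_{4g+4}^+$, argue that only one such summand appears, and then identify the dual graph of the unique reducible fibre. First, triviality of the Mordell-Weil rank together with \eqref{Eq:MWrk} gives $\sum_t(v_t - 1) = \rho(X) - 2 = 4g + 4$, while Theorem~\ref{Thm:MW} together with triviality of the torsion gives $T = \mathrm{NS}(X)$. Decomposing $T^- = U^- \oplus \bigoplus_v R_v^-$ orthogonally, with $U = \mathbb{Z}(O) + \mathbb{Z}F$ unimodular and $R_v^-$ generated by the components of the $v$-th reducible fibre not meeting $(O)$, the unimodularity of $\mathrm{NS}(X)^-$ forces $\bigoplus_v R_v^- = (U^-)^\perp \simeq D_{4g+4}^+$, and comparing discriminants shows that each $R_v^-$ is individually positive-definite and unimodular.

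Next I would rule out all but one reducible fibre. A rank-one $R_v^-$ requires a $(-1)$-curve in a fibre, which is excluded by relative minimality. For $g \geq 2$, a higher-rank $R_v^-$ generated purely by $(-2)$-curves is a root sublattice of $D_{4g+4}^+$; but the root sublattice of $D_{4g+4}^+$ equals $D_{4g+4}$ of index two, so for $\bigoplus_v R_v^-$ to reach the full $D_{4g+4}^+$ at least one $R_v^-$ must contain a non-$(-2)$-curve component. By Theorem~\ref{Thm:PM} the only such candidate is the strict transform of the minimal section $\Delta_{[d]}$ of a relatively minimal model $\Sigma_d$ with $d \in \{g, g+1\}$, acquiring self-intersection $-(g+1)$ after the requisite blow-up, so there is a single reducible fibre $F_0$ containing this special component. (The case $g = 1$ corresponds to the classical type $\mathrm{II}^*$ elliptic fibre.) I would complete the uniqueness argument by excluding auxiliary $(-2)$-curve-only summands using Corollary~\ref{Cor:DFofRuling} on degenerate fibres of the ruling $\Phi_{|(K_X+F)/(g-1)|}$ and rank comparisons. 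Once $F_0$ is identified, the contraction sequence $E_{4g+4}, \Theta_0, \Theta_1, \ldots, \Theta_{4g+2}$ as in the proof of Lemma~\ref{Lem:2b}(I) reconstructs the intersection matrix displayed there, confirming that the dual graph of $F_0$ is exactly that of Figure~\ref{fig:VIII-4}.

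The main obstacle is excluding additional $(-2)$-curve-only reducible fibres whose contributions are unimodular (necessarily of $E_8$-type) and orthogonal to the special summand $R_{F_0}^-$. I expect to handle this by combining the geometric classification from Corollary~\ref{Cor:DFofRuling} with the constraint that any such orthogonal decomposition of $D_{4g+4}^+$ must be compatible with the $(4g+4)$-point blow-up structure coming from Theorem~\ref{Thm:PM}; the special $-(g+1)$-component, being infinitely near the unique $\Delta_{[d]}$, geometrically forces the other exceptional divisors to cluster in the same fibre, leaving no room for a disjoint $E_8$-configuration.
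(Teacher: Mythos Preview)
Your lattice-theoretic framework is a legitimate alternative to the paper's argument, and the reduction $\bigoplus_v R_v^- \simeq D_{4g+4}^+$ with each summand unimodular is correct. However, several steps after this remain genuine gaps rather than routine details. First, your identification of the non-root component as ``the strict transform of $\Delta_{[d]}$ with $d\in\{g,g+1\}$'' does not follow from Theorem~\ref{Thm:PM} as stated; that theorem only bounds $d\le g+1$ and says nothing directly about which curves can occur as fibre components with self-intersection $\le -3$. Second, the exclusion of orthogonal $E_8$-summands (which you flag as the main obstacle) is not actually carried out; your proposed fix via Corollary~\ref{Cor:DFofRuling} and the blow-up structure is plausible but would require substantial work --- essentially you would end up redoing the paper's argument. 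Third, invoking the contraction sequence from Lemma~\ref{Lem:2b}(I) to pin down the dual graph is circular: that lemma assumes the graph of Figure~\ref{fig:VIII-4} as hypothesis.

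The paper's proof bypasses all of this with one geometric observation you did not use: triviality of the Mordell--Weil group means $f$ has a \emph{unique} section, hence a unique $(-1)$-section $E_{4g+4}$. Given $\mu:X\to\Sigma_d$ as in Theorem~\ref{Thm:PM}, this forces the $4g+4$ base points $p_1,\dots,p_{4g+4}$ to lie in a single infinitely-near chain (else two branches of the exceptional tree would each terminate in a $(-1)$-section), and forces $p_2$ to be the tangent direction of the fibre $\Gamma_{[d]}$ at $p_1$ (else the strict transform $\Gamma_{[d]}-E_1$ would be a second $(-1)$-section). This immediately produces $4g+4$ connected $(-2)$-curves $\Gamma_{[d]}-E_1-E_2$, $E_i-E_{i+1}$ inside a single fibre of $f$; the Mordell--Weil rank formula~\eqref{Eq:MWrk} then leaves room for exactly one further component $\Theta$, and a short computation in $\mathrm{NS}(X)$ (using $\Theta\cdot F=0$ and $\Theta^2<0$) identifies $\Theta$ as $\Delta_{[d]}$ or $\Delta_{[d]}-E_1$ with $d=g+1$ or $g$ respectively. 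Both the uniqueness of the reducible fibre and its dual graph fall out simultaneously, with no need to analyse lattice decompositions of $D_{4g+4}^+$.
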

\begin{proof}
From Theorem~\ref{Thm:PM}, there exists a birational morphism $\mu:X\to\Sigma_d$ 
in order that $F$ is linearly equivalent to $2\Delta_{[d]}+(d+g+1)\Gamma_{[d]}-\sum_{i=1}^{4g+4}E_i$. 
We shall denote the $(-1)$-section of $f$ by $E_{4g+4}$. 
In particular, we remark that a section of $f$ is unique from the assumption. 
Therefore, in the process of contracting by $\mu$, 
we may assume that the point corresponding to $E_{i+1}$ is an infinitely near point of 
that to $E_i$ for $i=1,2,\ldots,4g+3$. 
Similarly, $\mu(E_2)$ corresponds a tangential direction at $\mu(E_1)$ of a fibre $\Gamma_{[d]}$ of $\Sigma_d$. 
Since $(-2)$-curves $\Gamma_{[d]}-E_1-E_2$ and $E_i-E_{i+1}$, $i=1,2,\ldots,4g+3$ are connected, 
a reducible singular fibre of $f$ contains all of them. 
However, they do not generate the reducible fibre of $f$, and we recall that 
$\Delta_{[d]}$, $\Gamma_{[d]}$ and $E_i$, $i=1, 2, \ldots, 4g+4$ form $\mathbb{Z}$-basis of $\mathrm{NS}(X)$. 
Hence, from $\rho(X)=4g+6$ and the equation of Mordell-Weil rank $(\ref{Eq:MWrk})$, we have that 
another component of the reducible fibre is unique and all other fibres of $f$ are irreducible. 
Furthermore, the assumption and Theorem~\ref{Thm:MW} imply that 
components $\Gamma_{[d]}-E_1-E_2$, $E_i-E_{i+1}$, $i=1, 2, \ldots, 4g+2$, 
the other component $\Theta$ and the unique section $E_{4g+4}$ form $\mathbb{Z}$-basis of $\mathrm{NS}(X)$. 
Remark that $\Theta.(\Gamma_{[d]}-E_1-E_2)$ and $\Theta.(E_i-E_{i+1})$ are non-negative. 
Then, $\Theta$ is $\Delta_{[d]}+\beta\Gamma_{[d]}$ or $\Delta_{[d]}+\beta\Gamma_{[d]}-E_1$ 
for some non-negative integer $\beta$ in a way similar to the proof of $(\mathrm{III})$ in Lemma~$\ref{Lem:2b}$. 
Here, $\Theta^2\geq0$ if $\beta>0$. 
It contradicts to the fact that $\Theta$ is a proper component of the reducible fibre of $f$. 
By considering $\Theta.F=0$, from $\Theta=\Delta_{[d]}$ and $\Theta=\Delta_{[d]}-E_1$, 
we have $d=g+1$ and $d=g$ respectively. 
Either way, $\Theta$, $\Gamma_{[d]}-E_1-E_2$ and $E_i-E_{i+1}$ 
form a singular fibre whose dual graph is as in {\upshape Figure~$\ref{fig:VIII-4}$}. 
\end{proof}

In order to show that $(3)$ implies $(2\mathrm{a})$ in Theorem~$\ref{MT}$, 
we use a weaker assumption than $(3)$ as follows：

\begin{lemma}\label{Lem:3}
Assume that $d=g$ or $g+1$ for all birational morphisms $X\to\Sigma_d$ satisfying conditions 
$(\mathrm{i})$, $(\mathrm{ii})$ in {\upshape Theorem~\ref{Thm:PM}}. 
Then $f$ has a reducible fibre whose dual graph is as in {\upshape Figure}~$\ref{fig:VIII-4}$. 
\end{lemma}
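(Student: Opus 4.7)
The plan is to take a birational morphism $\mu: X\to\Sigma_d$ as supplied by Theorem~\ref{Thm:PM} and the hypothesis (so $d\in\{g,g+1\}$), analyze the $(4g+4)$ base points of the induced pencil $\Lambda\subset|2\Delta_{[d]}+(d+g+1)\Gamma_{[d]}|$, and show via an elementary-transformation argument that they must all be infinitely near a single point on $\Delta_{[d]}$ along one fixed fibre direction. Blowing up this chain then produces the fibre of $f$ whose dual graph is Figure~\ref{fig:VIII-4}. Without loss of generality I take $d=g$: if only $d=g+1$ were realised, an elementary transformation at the point lying under the first $(-1)$-curve contracted by $\mu$ would return us to a morphism to $\Sigma_g$ satisfying $(\mathrm{i}), (\mathrm{ii})$, exactly as in the proof of Lemma~\ref{Lem:2b}$(\mathrm{IV})$.

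First, I would prove that exactly one base point $Q$ of $\Lambda$ lies on $\Delta_{[g]}$. At most one can, since a general member of $\Lambda$ meets $\Delta_{[g]}$ in a single point. If none did, the strict transform of $\Delta_{[g]}$ on $X$ would be a $(-g)$-section of $f$, and performing an elementary transformation of $\Sigma_g$ centred at any base point $Q'\notin\Delta_{[g]}$ (which exists since $4g+4>1$) would produce a birational morphism $\mu': X\to\Sigma_{g-1}$ satisfying conditions $(\mathrm{i}), (\mathrm{ii})$ of Theorem~\ref{Thm:PM}, contradicting the hypothesis.

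Next, let $\Gamma_Q$ denote the fibre of $\Sigma_g$ through $Q$. Applying the same elementary-transformation argument iteratively, at any base point on a fibre other than $\Gamma_Q$ or at any infinitely near base point departing from the successive strict transforms of $\Gamma_Q$, forces the remaining $4g+3$ base points to form a single chain $Q=Q_1,Q_2,\dots,Q_{4g+4}$ with each $Q_{i+1}$ infinitely near $Q_i$ along the $\Gamma_Q$ direction. Finally, resolving this chain and tracking self-intersections shows that the exceptional curves $\Theta_0,\dots,\Theta_{4g+2}$ form a linear chain of $(-2)$-curves, that the strict transform of $\Gamma_Q$ is a $(-2)$-curve $\Theta_{4g+3}$ attached to $\Theta_{4g+1}$, and that the strict transform of $\Delta_{[g]}$ is a $(-g-1)$-curve $\Theta_{4g+4}$ attached to $\Theta_{4g+2}$; the multiplicities forced by $F.\Theta_k=0$ then match Figure~\ref{fig:VIII-4}.

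The main obstacle is the elementary-transformation argument itself: one must verify both that each newly constructed $\mu'$ is indeed a morphism (this uses the fact that the chosen centre is a base point of $\Lambda$, so $\mu$ factors through the blowup, while the strict transform of the fibre through that centre is contractible after the subsequent blowups) and that $\mu'$ satisfies $(\mathrm{i}), (\mathrm{ii})$ of Theorem~\ref{Thm:PM}, so that the hypothesis can be invoked to derive the desired contradiction. This requires careful tracking of how the fibre class, the $(-1)$-sections, and the other base points behave under the transformation.
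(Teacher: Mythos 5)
Your overall strategy does coincide with the paper's: reduce to $d=g$ by an elementary transformation (the paper starts from $\mu':X\to\Sigma_{g+1}$ and notes that no base point of the pencil lies on $\Delta_{[g+1]}$ because $\Delta_{[g+1]}.\mu'_*F=0$, so the transformation lowers the degree), then show every proper base point of $\Lambda$ must lie on $\Delta_{[g]}$ (a base point off $\Delta_{[g]}$ permits an elementary transformation to $\Sigma_{g-1}$ satisfying $(\mathrm{i})$, $(\mathrm{ii})$, contradicting the hypothesis), hence there is a single proper base point $p_1\in\Delta_{[g]}$ with the other $4g+3$ base points infinitely near it. But your third step contains a genuine error. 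It is false that the chain of infinitely near points follows ``the $\Gamma_Q$ direction'' throughout, and your proposed contradiction ``at any infinitely near base point departing from the successive strict transforms of $\Gamma_Q$'' would rule out precisely the configuration you are trying to establish: in the correct (type $D$) configuration only $Q_2$ lies on the strict transform of $\Gamma_Q$, and $Q_3,\dots,Q_{4g+4}$ \emph{must} leave it, since $m\geq3$ blow-ups on the strict transform would give $F.\tilde\Gamma_Q=2-m<0$, impossible for a component of a fibre of $f$. Your own step 4 concedes this by asserting that the strict transform of $\Gamma_Q$ ends up a $(-2)$-curve, i.e.\ is blown up exactly twice; so step 3 is internally inconsistent with step 4, and no degree-lowering modification exists at those later departures --- if one did, the lemma's conclusion would be vacuous.

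What is missing is the tool the paper uses to control the infinitely-near configuration, namely Corollary~\ref{Cor:DFofRuling}. Since all base points lie over $p_1$, the pull-back of the fibre $\Gamma_{[g],1}$ through $p_1$ is the unique degenerate fibre of the ruling $\Phi_{|\Gamma_{[g]}|}=\Phi_{|(K_X+F)/(g-1)|}$, and the corollary forces its dual graph to be of type $A_{k+2}$ with $k=4g+3$ or $D_{k+1}$ with $k=4g+4$; this is what pins the base points into a single linear chain (alternatively, smoothness of the general member of $\Lambda$ at $p_1$ gives the chain) and reduces everything to the direction of $p_2$ alone. The type-$A$ case ($p_2$ not the tangent direction of $\Gamma_{[g],1}$) is then excluded not by an elementary transformation --- which is defined only at an honest point of the surface, not at an infinitely near one --- but by the explicit contraction the paper builds: blow up $p_1,p_2,p_3$, contract $(\mu_3)_*(\mu^*\Gamma_{[g],1}-E_1)$, $(\mu_3)_*(E_1-E_2)$, $(\mu_3)_*(E_2-E_3)$ in turn, check that the image of $\Delta_{[g]}-E_1$ is a minimal section with self-intersection $-(g-1)$, and verify $(\mathrm{i})$, $(\mathrm{ii})$ so the hypothesis applies. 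Your plan correctly names the verification burden but attaches the contradiction to the wrong configurations; to repair it, replace the blanket ``departure implies contradiction'' claim by the $A$/$D$ dichotomy and the three-step contraction above.
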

\begin{proof}
Let $\mu':X\to\Sigma_{g+1}$ be a birational morphism satisfying 
conditions $(\mathrm{i})$, $(\mathrm{ii})$ in Theorem~$\ref{Thm:PM}$. 
Then a base-point-free pencil $|F|$ can be considered as a subpencil 
$\Lambda'\subset|2(\Delta_{[g+1]}+(g+1)\Gamma_{[g+1]})|$ with $(4g+4)$ simple base points on $\Sigma_{g+1}$ through $\mu'$. 
Since $\Delta_{[g+1]}.2(\Delta_{[g+1]}+(g+1)\Gamma_{[g+1]})=0$, no base points of $\Lambda'$ lie on $\Delta_{[g+1]}$. 
We consider a base point $p_1'$ on $\Sigma_{g+1}$ and corresponding $(-1)$-curve $E_1'$. 
Let $\mu_1':X\to X_1$ be the birational morphism which contracts $(4g+3)$ $(-1)$-curves 
except $E_1'$ among $(4g+4)$ $(-1)$-curves contracted by $\mu'$. 
The composite of $\mu'$ and the elementary transformation $\Sigma_{g+1}\dashrightarrow\Sigma_g$ at $p_1'$ is 
a birational morphism $X\to\Sigma_g$ which factors through $\mu_1'$. 
Here the $(-1)$-curve contracted by the birational morphism 
$X_1\to\Sigma_g$ is the strict transform to $X_1$ of the fibre of $\Sigma_{g+1}$ passing through $p_1'$. 
In particular, it intersects with $(\mu'_1)_*F$ at just one point. 
Therefore, there exists a birational morphism $\mu:X\to\Sigma_g$ satisfying 
conditions $(\mathrm{i})$, $(\mathrm{ii})$ in Theorem~$\ref{Thm:PM}$ from the assumption. 

A base-point-free pencil $|F|$ can be considered as a subpencil 
$\Lambda\subset|2\Delta_{[g]}+(2g+1)\Gamma_{[g]}|$ with $(4g+4)$ simple base points on $\Sigma_{g}$ through $\mu$. 
If a base point of $\Lambda$ lies on $\Sigma_g\setminus\Delta_{[g]}$, then we obtain a birational morphism 
$X\to\Sigma_{g-1}$ satisfying conditions $(\mathrm{i})$, $(\mathrm{ii})$ in Theorem~$\ref{Thm:PM}$ 
from the elementary transformation $\Sigma_{g}\dashrightarrow\Sigma_{g-1}$ at the base point 
in the same way as above. 
It contradicts to the assumption. 
Thus a base point $p_1$ of $\Lambda$ is on $\Delta_{[g]}$. 
Since $\Delta_{[g]}.(2\Delta_{[g]}+(2g+1)\Gamma_{[g]})=1$, other base points $p_2, p_3, \ldots, p_{4g+4}$ of $\Lambda$ 
do not lie on $\Delta_{[g]}\setminus p_1$. 
Let $\Gamma_{[g],1}$ be the fibre of $\Sigma_g$ passing through $p_1$. 
Then the pull-back of $\Gamma_{[g],1}$ to $X$ is 
a unique degenerate fibre of the ruling $\Phi_{|\Gamma_{[g]}|}:X\to\mathbb{P}^1$. 
The dual graph is of type $A_{k+2}$ in Figure~$\ref{fig:DFofRulingA}$ with $k=4g+3$ or 
of type $D_{k+1}$ in Figure~$\ref{fig:DFofRulingD}$ with $k=4g+4$. 
Hence, we may assume that $p_{i+1}$ is infinitely near point of $p_i$ for simplicity. 

We support that the dual graph of the unique degenerate fibre $\mu^*\Gamma_{[g],1}$ of $\Phi_{|\Gamma_{[g]}|}$ 
is of type $A_{k+2}$ with $k=4g+3$. 
Then the tangential direction of $\Gamma_{[g],1}$ at $p_1$ is different from the direction corresponding to $p_2$. 
Let $X_3\to\Sigma_g$ be the blowing up at $p_1$, $p_2$ and $p_3$. 
We denote $(-1)$-curves corresponding to $p_1$, $p_2$ and $p_3$ by $E_1$, $E_2$ and $E_3$ respectively. 
Let $\mu_3:X\to X_3$ be the birational morphism which contracts $(4g+1)$ $(-1)$-curves except $E_1$, $E_2$ and $E_3$ 
among $(4g+4)$ $(-1)$-curves contracted by $\mu$. 
We consider the birational morphism $\tau:X_3\to\Sigma_d$ contracting 
$(\mu_3)_*(\mu^*\Gamma_{[g],1}-E_1)$, $(\mu_3)_*(E_1-E_2)$ and $(\mu_3)_*(E_2-E_3)$ in turn. 
We have that $(\tau\circ\mu_3)_*(\Delta_{[g]}-E_1)$ is a minimal section of $\Sigma_d$ and $d=g-1$ 
since $(\tau\circ\mu_3)_*(\Delta_{[g]}-E_1).(\tau\circ\mu_3)_*\Gamma_{[g]}=1$ and 
$(\tau\circ\mu_3)_*(\Delta_{[g]}-E_1)^2=-g+1\leq0$. 
In fact, $(\mu^*\Gamma_{[g],1}-E_1)$ is a $(-1)$-section of $f$. 
Furthermore, $\tau\circ\mu_3:X\to\Sigma_{g-1}$ is 
a birational morphism satisfying conditions $(\mathrm{i})$, $(\mathrm{ii})$ in Theorem~$\ref{Thm:PM}$. 
It contradicts to the assumption. 
Thus, the dual graph of the unique degenerate fibre $\mu^*\Gamma_{[g],1}$ of $\Phi_{|\Gamma_{[g]}|}$ 
is of type $D_{k+1}$ with $k=4g+4$. 
In particular, $p_2$ corresponds to the tangential direction of $\Gamma_{[g],1}$ at $p_1$. 
Hence, $f$ has a reducible fibre whose irreducible components are 
$(\Delta_{[g]}-E_1)$, $(\mu^*\Gamma_{[g],1}-E_1-E_2)$ and $(E_i-E_{i+1})$, $i=1, 2, \ldots, 4g+3$. 
The dual graph is as in Figure~$\ref{fig:VIII-4}$. 
\end{proof}

\begin{lemma}\label{Lem:2}
$(2\mathrm{a})$ and $(2\mathrm{b})$ in {\upshape Theorem~\ref{MT}} are equivalent. 
Furthermore, a reducible fibre of $f$ is unique. 
\end{lemma}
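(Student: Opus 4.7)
The plan is to establish both implications of the equivalence separately, and then deduce uniqueness from the Mordell-Weil rank formula.

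The direction $(2\mathrm{a}) \Rightarrow (2\mathrm{b})$ is immediate by comparison of the two figures. The dual graph of Figure~\ref{fig:VIII-4} is obtained from the Dynkin-type diagram of Figure~\ref{fig:D_4g+4^+} by adjoining the $(-2)$-curve $\Theta_0$ (of multiplicity one) at the end of the long chain, so the $D_{4g+4}^+$ diagram appears as a subgraph.

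For $(2\mathrm{b}) \Rightarrow (2\mathrm{a})$, let $F_0$ be a reducible fibre whose dual graph contains $\mathcal{D}$, the Dynkin diagram of $D_{4g+4}^+$, realized by components $C_1, \ldots, C_{4g+4}$ of $F_0$. The Gram matrix $M$ of the $C_i$'s is invertible over $\mathbb{Z}$ since $D_{4g+4}^+$ is unimodular. Theorem~\ref{Thm:PM} supplies a $(-1)$-section $E$ of $f$, and a direct pairing computation (using $F^2 = 0$, $E \cdot F = 1$, $C_i \cdot F = 0$ and the invertibility of $M$) shows that $\{F, E, C_1, \ldots, C_{4g+4}\}$ is $\mathbb{Q}$-linearly independent in $\mathrm{NS}(X)$, hence a $\mathbb{Q}$-basis since $\rho(X) = 4g+6$. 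Writing $F_0 = \sum_i m_i C_i + \sum_j n_j \Theta'_j$ with $\Theta'_j$ the remaining irreducible components, the equations $F_0 \cdot C_k = 0$ translate to $M\vec m = -\sum_j n_j(\Theta'_j \cdot C_\bullet)$, so $\vec m$ is expressed as an integer linear combination of the columns of $-M^{-1}$ weighted by the attachment intersection numbers. Imposing positivity of the $m_i$ and $n_j$, together with connectedness and reducedness of $F_0$, a case-by-case analysis on the number of additional components and their points of attachment to $\mathcal{D}$ singles out a unique admissible configuration: exactly one extra $(-2)$-curve $\Theta_0$ attached at the long-chain end opposite the fork, yielding precisely the dual graph in Figure~\ref{fig:VIII-4}.

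For uniqueness of the reducible fibre, assume $(2\mathrm{a})$. The fibre of Figure~\ref{fig:VIII-4} has $4g+5$ components and contributes $4g+4$ to $\sum_t(v_t - 1)$. The Mordell-Weil rank formula \eqref{Eq:MWrk} with $\rho(X) = 4g+6$ yields
\[
0 \leq \mathrm{rk}\,\mathcal{J}_\mathcal{F}(\mathbb{K}) = (4g+4) - \sum_t(v_t - 1) \leq 0,
\]
forcing $\sum_{t \neq t_0}(v_t - 1) = 0$, so every other fibre of $f$ is irreducible. The main obstacle lies in the case analysis for $(2\mathrm{b}) \Rightarrow (2\mathrm{a})$: one must verify, vertex by vertex on $\mathcal{D}$, that no attachment of extra components other than the one yielding Figure~\ref{fig:VIII-4} produces a positive integer multiplicity vector compatible with a connected reduced fibre.
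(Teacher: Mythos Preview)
Your handling of $(2\mathrm{a})\Rightarrow(2\mathrm{b})$ and of the uniqueness assertion via \eqref{Eq:MWrk} is correct and coincides with the paper's. The substance of the lemma lies in $(2\mathrm{b})\Rightarrow(2\mathrm{a})$, and here your argument remains a sketch: you set up the linear system $M\vec m=-\sum_j n_j(\Theta'_j\cdot C_\bullet)$ correctly, but the decisive ``case-by-case analysis on the number of additional components and their points of attachment'' is never carried out---you yourself flag it as the main obstacle. As written this is a gap, not a proof. It is moreover not clear that the purely lattice-theoretic constraints you list suffice: relative minimality forbids only \emph{smooth rational} curves with self-intersection $-1$, so an extra component $\Theta$ with $\Theta^2=-1$ and $p_a(\Theta)\geq1$ is not excluded by positivity of the $m_i$ and $\Theta^2=v^{\mathsf T}M^{-1}v$ alone; controlling $K_X\cdot\Theta$ seems to require knowing how $\Theta$ meets a fibre of the ruling.

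The paper bypasses all of this combinatorics. For $g=1$ it invokes Kodaira's list; for $g\geq2$ it uses the auxiliary ruling $\Phi_{|(K_X+F)/(g-1)|}$ and Corollary~\ref{Cor:DFofRuling}, which forces every $(-2)$-component of a fibre of $f$ into a degenerate fibre of the ruling of type $A_{k+2}$ or $D_{k+1}$. Two short geometric steps then finish the job: if $F_0$ had only the $4g+4$ components of the $D_{4g+4}^+$ diagram, the ruling would have exactly two degenerate fibres and $\mu_*\Theta_{4g+4}$ would meet the two corresponding fibres of $\Sigma_d$ in different numbers of points, a contradiction; once the extra component $\Theta$ is known to exist, the hypothesis $\Theta^2<-2$ makes $\mu_*\Theta$ a section of $\Sigma_d$, and a self-intersection count yields $\Theta^2\geq g-3\geq-1$, again a contradiction, so $\Theta^2=-2$ and Corollary~\ref{Cor:DFofRuling} pins down the dual graph as in Figure~\ref{fig:VIII-4}. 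If you want to rescue the lattice approach, you must actually enumerate the attachment vectors $v$ and verify that positivity of $-M^{-1}v$, the bound $\Theta^2\leq-2$, and the presence of a multiplicity-one component meeting the $(-1)$-section leave only the single configuration $v=e_1$, $n=1$.
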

\begin{proof}
If we assume $(2\mathrm{a})$ in Theorem~\ref{MT}, then the equation of Mordell-Weil rank $(\ref{Eq:MWrk})$ 
implies the uniqueness of a reducible fibre of $f$. 
It is clear that $(2\mathrm{a})$ implies $(2\mathrm{b})$. 
In what follows, we assume $(2\mathrm{b})$. 
The case $g=1$ follows from the classification of singular fibres by Kodaira \cite{Kodai63}. 
We show the case $g\geq2$ by applying Corollary~\ref{Cor:DFofRuling} as follows. 

Let $F_0$ be a reducible fibre of $f$ whose dual graph contains the extended Dynkin diagram of 
unimodular integral lattice $D_{4g+4}^+$ in Figure~$\ref{fig:D_4g+4^+}$. 
We denote the irreducible component of $F_0$ whose self-intersection number is $(-g-1)$ by $\Theta_{4g+4}$. 
We remark that the number of irreducible components of $F_0$ is at most $(4g+5)$ 
from the equation of Mordell-Weil rank $(\ref{Eq:MWrk})$. 

At first, we support that the number of irreducible components of $F_0$ is exactly $(4g+4)$. 
From Corollary~\ref{Cor:DFofRuling}, there exists a unique $(-1)$-section $E_{4g+4}$ of $f$ 
which does not intersect with $\Theta_{4g+4}$. 
Let $\Gamma_{[d],0}$ be the degenerate fibre of $\Phi_{|(K_X+F)/(g-1)|}$ which 
consists of $E_{4g+4}$ and components of $F_0$ except $\Theta_{4g+4}$. 
We remark that the dual graph of $\Gamma_{[d],0}$ is of type $D_{k+1}$ with $k=4g+3$ in Figure~$\ref{fig:DFofRulingD}$. 
From Theorem~\ref{Thm:PM}, the ruling $\Phi_{|(K_X+F)/(g-1)|}$ has exactly one other degenerate fibre 
$\Gamma_{[d],\infty}$ whose dual graph is of type $A_{k+2}$ with $k=0$ in Figure~$\ref{fig:DFofRulingA}$. 
We remark that $\Gamma_{[d],\infty}$ consists of two $(-1)$-sections of $f$ which intersect with $F_0$ on $\Theta_{4g+4}$. 
We have a birational morphism $\mu:X\to\Sigma_d$ which satisfies conditions $(\mathrm{i})$, $(\mathrm{ii})$ 
in Theorem~$\ref{Thm:PM}$ by contracting $E_{4g+4}$ and $(4g+3)$ of components of $\Gamma_{[d],0}$ and $\Gamma_{[d],\infty}$. 
Then $\mu_*\Theta_{4g+4}$ intersects with the fibre $\mu_*\Gamma_{[d],0}$ of $\Sigma_d$ at one point. 
On the other hand, it intersects with the other fibre $\mu_*\Gamma_{[d],\infty}$ of $\Sigma_d$ at two point, 
which is absurd. 
Therefore, $F_0$ has exactly one other component $\Theta$. 

Next, we support $\Theta^2<-2$. 
In the quite same argument as the case where the number of irreducible components of $F_0$ is exactly $(4g+4)$, 
the ruling $\Phi_{|(K_X+F)/(g-1)|}$ has exactly two degenerate fibres $\Gamma_{[d],0}$ and $\Gamma_{[d],\infty}$ as above, 
and we have a birational morphism $\mu:X\to\Sigma_d$. 
Then $\mu_*\Theta_{4g+4}$ is a minimal section of $\Sigma_d$ since $\mu_*\Theta_{4g+4}.\mu_*\Gamma_{[d],0}=1$ and 
$(\mu_*\Theta_{4g+4})^2\leq (-g-1)+2=-g+1\leq-1$. 
From Corollary~\ref{Cor:DFofRuling}, $\Theta$ is not a component of a degenerate fibre of $\Phi_{|(K_X+F)/(g-1)|}$. 
Therefore, we have that $\mu_*\Theta$ is a section of $\Sigma_d$ from $(\mathrm{i})$ in Theorem~\ref{Thm:PM}. 
In particular, $(\mu_*\Theta)^2\geq d=-(\mu_*\Theta_{4g+4})^2\geq g-1$ and 
$\mu_*\Theta$ intersects with $\mu_*\Gamma_{[d],0}$ and $\mu_*\Gamma_{[d],\infty}$ transversally. 
Remark that $\Gamma_{[d],0}$ is a degenerate fibre of $\Phi_{|(K_X+F)/(g-1)|}$ of type $D_{k+1}$ with $k=4g+3$. 
Then we have $\Theta^2\geq (g-1)-2=g-3\geq-1$, which is absurd. 
Hence, we get $\Theta^2=-2$. 
Thus, we see that the dual graph of $F_0$ is as in Figure~$\ref{fig:VIII-4}$ from Corollary~\ref{Cor:DFofRuling}. 
\end{proof}

In the last, we prove the main theorem: 

\begin{proof}[\bf \boldmath Proof of Theorem~$\ref{MT}$]
We have that $(1)$ and $(2\mathrm{a})$ in Theorem~$\ref{MT}$ are equivalent from 
Lemma~$\ref{Lem:1}$ and $(\mathrm{II})$ in Lemma~$\ref{Lem:2b}$. 
It follows from $(\mathrm{IV})$ in Lemma~$\ref{Lem:2b}$ that $(2\mathrm{a})$ in Theorem~$\ref{MT}$ implies $(3)$. 
If we assume $(3)$ in Theorem~$\ref{MT}$, then the assumption in Lemma~$\ref{Lem:3}$ is satisfied, 
which leads to $(2\mathrm{a})$ in Theorem~$\ref{MT}$. 
Therefore, we see that $(1)$, $(2\mathrm{a})$, $(2\mathrm{b})$ and $(3)$ in Theorem~$\ref{MT}$ are 
equivalent from Lemma~$\ref{Lem:2}$. 
We show that $(5\mathrm{a})$ implies $(2\mathrm{a})$ from a standard calculation for a double cover. 
$(\mathrm{I})$ in Lemma~$\ref{Lem:2b}$ says that $(4\mathrm{a})$ in Theorem~$\ref{MT}$ follows from $(2\mathrm{a})$. 

Now, by taking $t$ and $y$ as coordinates of the first component and the second one of 
$\varSigma_0=\mathbb{P}^1\times\mathbb{P}^1$ respectively, 
$\psi(t,y,0)=0$ in $(5\mathrm{b})$ in Theorem~$\ref{MT}$ defines a branch divisor of a double cover 
$\varsigma\circ\pi^\flat:X\to\varSigma_0$ in $(5\mathrm{a})$. 
Furthermore, $\mathbb{C}[t,y,z]/(\psi(t,y,z))$ is an affine coordinate ring of the surface obtained from 
the finite double cover of $\varSigma_0$ whose branch divisor is defined by $\psi(t,y,0)=0$. 
Therefore, $(5\mathrm{a})$ follows from $(5\mathrm{b})$. 
Conversely, we support $(5\mathrm{a})$ firstly. 
We may assume that an intersection point of $\varDelta_{[0],0}$ and $\varGamma_{[0],0}$ is 
$(t,y)=(0,0)$ by performing projective transformations on $\varSigma_0$ if necessary. 
Let $\psi_B(t,y)=0$ be a defining equation of a branch divisor 
$(\varDelta_{[0],0}+\varGamma_{[0],0}+B_{[0],0})$ in $(5\mathrm{a})$. 
Put $\psi(t,y,z)=z^2-\psi_B(t,y)$. 
Then we have $(5\mathrm{b})$ from a standard calculation. 
Similarly, an equation in $(\mathrm{II})$ in $(4\mathrm{b})$ is exactly a defining equation of a pencil $\Lambda$ in 
$(4\mathrm{a})$ by performing a projective transformation of a base curve $\mathbb{P}^1$ of $f$ if necessary. 
In the result, $(4\mathrm{a})$ and $(4\mathrm{b})$ are equivalent.

In the last, we show that $(4\mathrm{a})$ and $(4\mathrm{b})$ imply $(5\mathrm{a})$ as follows. 
We consider on Zariski open defined by $X_0Y_0\not=0$ in $\Sigma_g$. 
Then the pencil $\Lambda$ as in $(4\mathrm{a})$ is defined by 
\begin{align}
\{ x^2y^{2g+1}=t(c_{2,2g+1}x^2y^{2g+1}+c_{2,2g}x^2y^{2g}+ \cdots +c_{2,1}x^2y+c_{2,0}x^2&\label{DeqPonSg}\\
+c_{1,g+1}xy^{g+1}+c_{1,g}xy^g+ \cdots +c_{1,1}xy+c_{0,1}y&)\}_{t\in\mathbb{P}^1}, \notag
\end{align}
where $c_{i,j}\in\mathbb{C}$, $i=0,1,2$, $j=0,1,\ldots,ig+1$ with $c_{1,0}=c_{0,0}=0$ and $c_{2,0}c_{0,1}\not=0$. 
The meromorphic map defined by $(x,y)\mapsto(t,y)$ induces 
the generically finite double cover $f\times\Phi_{|\Gamma_{[g]}|}:X\to\varSigma_0$ 
by eliminating the base points of $\Lambda$. 
It is the double cover as in Corollary~$\ref{Cor:FDC}$ when $g\geq2$. 
Therefore, the branch divisor $B$ of $f\times\Phi_{|\Gamma_{[g]}|}$ contains the curves defined 
the discriminant of the equation $(\ref{DeqPonSg})$ for $x$. 
On the other hand, the discriminant 
\begin{align*}
ty
\left(
\begin{array}{rrl}
4c_{0,1}y^{2g+1}\hspace{-10pt}&-4c_{2,0}c_{0,1}\hspace{-8pt}&t\\
&-4c_{0,1}\hspace{-10pt}&t(c_{2,2g+1}y^{2g+1}+c_{2,2g}y^{2g}+ \cdots +c_{2,1}y)\\
&+\hspace{-10pt}&ty(c_{1,g+1}y^g+c_{1,g}y^{g-1}+ \cdots +c_{1,2}y+c_{1,1})^2
\end{array}
\right)
\end{align*}
defines $\varDelta_{[0],0}$, $\varGamma_{[0],0}$ and a section $B_{[0],0}$ of $\varSigma_0$ 
which is linearly equivalent to $((2g+1)\varDelta_{[0]}+\varGamma_{[0]})$. 
In fact, the discriminant is a defining equation of $B=(\varDelta_{[0],0}+\varGamma_{[0],0}+B_{[0],0})$ 
from Corollary~$\ref{Cor:FDC}$. 
Furthermore, we see that 
$B_{[0],0}$ has a contact of order $(2g+1)$ with $\varGamma_{[0],0}$ 
at the intersection point $(0,0)$ of $\varGamma_{[0],0}$ with $\varDelta_{[0],0}$ 
by considering $c_{2,0}c_{0,1}\not=0$ and the defining equation. 
\end{proof}

%
%

\bigskip
\address{
General Education, \\
Gifu National College of Technology, \\
Kamimakuwa 2236-2, Motosu, \\ 
Gifu 501-0495, Japan
}
{kit058shiny@ gifu-nct.ac.jp}
\end{document}